\theoremstyle{plain}
\newtheorem{theorem}{Theorem}[section]
\newtheorem{lemma}[theorem]{Lemma}
\newtheorem{coro}[theorem]{Corollary}
\newtheorem{prop}[theorem]{Proposition} 
\theoremstyle{remark} 
\newtheorem{rem}[theorem]{Remark}
\newcommand{\ackn}{\noindent{\sc Acknowledgement }\hspace{5pt} }
\newcommand{\F}{\mathbb F}
\DeclareMathOperator{\Soc}{Soc}
\DeclareMathOperator{\id}{id}
\DeclareMathOperator{\Id}{I}
\DeclareMathOperator{\Aut}{Aut}
\DeclareMathOperator{\GL}{GL}
\DeclareMathOperator{\AGL}{A \Gamma L}
\renewcommand{\phi}{\varphi}
\begin{document}

\title[Groups with a base property]{Groups with a base property
  analogous \\ to that of vector spaces} 


\author{Paul Apisa}

\address{Department of Mathematics, University of Chicago, Chicago, IL
  60615} \email{paul.apisa@gmail.com}

\author{Benjamin Klopsch}

\address{Department of Mathematics, Royal Holloway, University of
  London, Egham TW20 0EX, UK}

\curraddr{Institut f\"ur Algebra und Geometrie, Mathematische 
  Fakult\"at, Otto-von-Guericke-Universit\"at Magdeburg, 39016
  Magdeburg, Germany}

\email{Benjamin.Klopsch@rhul.ac.uk}


\begin{abstract}
  A $\mathcal{B}$-group is a group such that all its minimal
  generating sets (with respect to inclusion) have the same size.  We
  prove that the class of finite $\mathcal{B}$\nobreakdash-groups is
  closed under taking quotients and that every finite
  $\mathcal{B}$-group is solvable.  Via a complete classification of
  Frattini-free finite $\mathcal{B}$-groups we obtain a general
  structure theorem for finite $\mathcal{B}$-groups.  Applications
  include new proofs for the characterization of finite matroid groups
  and the classification of finite groups with the basis property.
\end{abstract}

\maketitle


\section{Introduction}
 
Let $G$ be a finite group.  A generating set $X$ of $G$ is said to be
\emph{minimal} if no proper subset of $X$ generates~$G$.  We denote by
$d(G)$ the minimal number of generators of $G$, i.e., the smallest
size of a minimal generating set of~$G$, and we write $m(G)$ for the
largest size of a minimal generating set of~$G$.

Whereas the invariant $d(G)$ has been well studied for many groups
$G$, its counterpart $m(G)$ has not received a similar degree of
attention.  First steps toward investigating the latter have been
taken in the context of permutation groups.  For instance,
in~\cite{Wh00} Whiston proved that $m(\textup{Sym}(n)) = n-1$ for the
finite symmetric group of degree~$n$.  Furthermore, Cameron and Cara
gave in~\cite{CaCa02} a complete description of the maximal
independent generating sets of~$\textup{Sym}(n)$; these are precisely
the minimal generating sets of maximal size.  Clearly, Whiston's
result implies that
\[
m(\textup{Sym}(n)) - d(\textup{Sym}(n)) \to \infty \qquad \text{as $n
  \to \infty$.}
\]
This suggests a natural `classification problem': given a non-negative
integer~$c$, characterize all finite groups $G$ such that $m(G)-d(G)
\leq c$.  The results of Saxl and Whiston in~\cite{SaWh02} show that
for projective special linear groups $G = \mathrm{PSL}_2(p^r)$ the
difference $m(G)-d(G)$ depends on the number of prime divisors of~$r$.
In particular, $m(G)-d(G) = 1$ for all $G = \mathrm{PSL}_2(p)$ with
$p$ not congruent to $\pm 1$ modulo $8$ or~$10$.  Since the Frattini
subgroup $\Phi(G)$ consists of all `non-generators' of $G$, we have
$d(G) = d(G/\Phi(G))$ and $m(G) = m(G/\Phi(G))$.  Hence one may
initially focus on groups $G$ which are \emph{Frattini-free}, i.e.,
for which $\Phi(G) = 1$.

In the present article we solve the above stated problem for $c=0$.
We say that the group $G$ has property~$\mathcal{B}$, or the
\emph{weak basis property}, if all its minimal generating sets have
the same size, equivalently if $m(G) = d(G)$.  Groups with property
$\mathcal{B}$ are called $\mathcal{B}$-groups for short.  A group is
said to have the \emph{basis property} if all its subgroups have
property~$\mathcal{B}$.  The Burnside basis theorem states that all
finite $p$-groups are $\mathcal{B}$-groups and, consequently, have the
basis property.

Groups with the basis property as well as variants, such as matroid
groups, have been considered by a number of authors; e.g.,
see~\cite{MBQ} and references therein.  Indeed, McDougall-Bagnall and
Quick initiated in~\cite{MBQ} the systematic study of finite
$\mathcal{B}$-groups and used this to classify groups with the basis
property.  In this context they raised the following fundamental
questions.  Is it true that property $\mathcal{B}$ is inherited by
quotient groups?  Is it possibly true that every finite
$\mathcal{B}$-group is solvable?  We answer both questions positively.

\begin{prop} \label{pro:quotients} Every quotient of a finite
  $\mathcal{B}$-group is again a $\mathcal{B}$-group.
\end{prop}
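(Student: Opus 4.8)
The plan is to induct on $|G|$ and reduce to the case where $N$ is a minimal normal subgroup. Concretely, if one can establish the one-step assertion ``$N$ minimal normal in a finite $\mathcal{B}$-group $G$ implies $G/N$ is a $\mathcal{B}$-group,'' then for an arbitrary normal subgroup $N \neq 1$ I would choose a minimal normal subgroup $N_0 \leq N$, conclude that $G/N_0$ is a $\mathcal{B}$-group of smaller order, and apply the inductive hypothesis to $G/N_0$ and its normal subgroup $N/N_0$, using $(G/N_0)/(N/N_0) \cong G/N$. So the whole content is the one-step claim, and the main tool is a lifting procedure for minimal generating sets.

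The lifting lemma is as follows. Given a minimal generating set $\overline{Y} = \{\bar y_1, \dots, \bar y_k\}$ of $G/N$, choose lifts $y_i \in G$ and a subset $Z \subseteq N$ of least cardinality with $\langle y_1, \dots, y_k, Z\rangle = G$. Then $\{y_1, \dots, y_k\} \cup Z$ is a minimal generating set of $G$: deleting some $y_i$ makes the image in $G/N$ a proper subgroup by minimality of $\overline{Y}$, and deleting an element of $Z$ contradicts the minimal choice of $Z$. Since $G$ is a $\mathcal{B}$-group this forces $k + |Z| = d(G)$, so in particular every minimal generating set of $G/N$ has size at most $d(G)$. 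This settles the sub-case $N \leq \Phi(G)$ immediately: here $\Phi(G/N) = \Phi(G)/N$, so by the invariance of $m$ and $d$ modulo the Frattini subgroup noted in the introduction we get $m(G/N) = m(G) = d(G) = d(G/N)$, and $G/N$ has property $\mathcal{B}$.

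The crux is then to show that $|Z|$ does not depend on $\overline{Y}$. This works cleanly in the sub-case $N \not\leq \Phi(G)$ when $N$ is \emph{complemented}, say $G = N \rtimes M$ with $M$ maximal and $M \cong G/N$. Here I would lift each minimal generating set of $G/N$ into the \emph{fixed} complement $M$, so that its lift $Y$ always satisfies $\langle Y\rangle = M$; extending minimally by elements of $N$ then requires exactly $d_M(N)$ of them, where $d_M(N)$ denotes the least number of generators of $N$ as a group with operators $M$ (conjugation), an invariant independent of $\overline{Y}$. Hence every minimal generating set of $G/N \cong M$ has size $d(G) - d_M(N)$, which proves property $\mathcal{B}$ for $G/N$. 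Since an abelian minimal normal $N$ with $N \not\leq \Phi(G)$ is automatically complemented — choosing a maximal $M$ with $NM = G$, the subgroup $N \cap M$ is normalized by $M$ and, as $N$ is abelian, also by $N$, hence is normal in $G$ and therefore trivial — this disposes of every abelian minimal normal subgroup.

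The main obstacle is the one remaining possibility: a \emph{non-abelian} minimal normal subgroup $N$ with $N \not\leq \Phi(G)$ that is not complemented, i.e.\ arising from a non-split extension. Then no lift of a minimal generating set of $G/N$ generates a complement, the intersection $\langle Y\rangle \cap N$ is no longer forced to be trivial, and the invariance of $|Z|$ breaks down, so the clean argument above fails. I expect this to be the hard part. One must either rule out such a configuration inside a finite $\mathcal{B}$-group — the natural attempt being to manufacture, from a hypothetical non-split non-abelian socle, two minimal generating sets of $G$ of different sizes and thereby contradict property $\mathcal{B}$ — or to treat it by a dedicated structural argument. This is precisely the point at which non-solvable behaviour would have to be excluded, so I anticipate that overcoming it is closely linked to the companion fact that finite $\mathcal{B}$-groups are solvable.
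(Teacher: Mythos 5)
Your argument is incomplete, and the gap is the one you yourself flag: a non-abelian minimal normal subgroup $N$ with $N \not\leq \Phi(G)$ that admits no complement. Your two completed cases ($N \leq \Phi(G)$, and $N$ complemented, which covers abelian $N$) are fine, but the remaining configuration cannot be dismissed, and the repair you hint at is not available: in this paper solvability of finite $\mathcal{B}$-groups (Theorem~\ref{thm:solvable}) is proved \emph{using} Proposition~\ref{pro:quotients}, through Lemmas~\ref{lem:plus-one}--\ref{lem:d-geq-3} and the Lucchini--Menegazzo results, so appealing to it here would be circular. Ruling out a non-split non-abelian socle inside a $\mathcal{B}$-group directly is essentially as hard as the solvability theorem itself; it is not a case one can expect to ``treat by a dedicated structural argument'' within this proposition.

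The missing idea, which makes your entire case division (and the reduction to minimal normal subgroups) unnecessary, is that you do not need $\lvert Z \rvert$ to be independent of $\overline{Y}$; you only need to compare two lifted generating sets that share the \emph{same} tail $Z$, and this can be forced by the choice of lifts. Fix lifts $x_1, \ldots, x_d \in G$ of a minimum-size minimal generating set of $G/N$, where $d = d(G/N)$. Given a minimal generating set $\bar y_1, \ldots, \bar y_e$ of $G/N$ with $e > d$, write each $\bar y_i$ as a word in the $x_j^\pi$ and let $y_i$ be the same word in the $x_j$, so that $y_i^\pi = \bar y_i$ and $\langle y_1, \ldots, y_e \rangle \leq \langle x_1, \ldots, x_d \rangle$. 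Your own lifting lemma produces $z_1, \ldots, z_f \in N$ such that $y_1, \ldots, y_e, z_1, \ldots, z_f$ is a minimal generating set of $G$; but then $x_1, \ldots, x_d, z_1, \ldots, z_f$ also generates $G$, whence $d(G) \leq d + f < e + f \leq m(G)$, contradicting property $\mathcal{B}$. This handles an arbitrary normal subgroup $N$ in one stroke, with no complements, no induction, and no input from the solvability theorem; it is exactly the proof in the paper.
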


\begin{theorem} \label{thm:solvable}
  Every finite $\mathcal{B}$-group is solvable.
\end{theorem}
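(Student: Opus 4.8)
The plan is to argue by contradiction via a minimal counterexample. Suppose the theorem fails and let $G$ be a non-solvable finite $\mathcal{B}$-group of smallest possible order. By Proposition~\ref{pro:quotients} every proper quotient of $G$ is again a $\mathcal{B}$-group, and has smaller order, so by minimality every proper quotient of $G$ is solvable. First I would record the structural consequences. If $G$ had two distinct minimal normal subgroups $N_1\neq N_2$, then $N_1\cap N_2=1$ and $G$ would embed into the solvable group $G/N_1\times G/N_2$, forcing $G$ solvable; hence $G$ has a unique minimal normal subgroup $N=\Soc(G)$. Were $N$ solvable, then $G$, as an extension of the solvable group $N$ by the solvable group $G/N$, would be solvable; so $N$ is non-abelian, and $N\cong S^k$ for some non-abelian finite simple group $S$ and some $k\geq 1$. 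Since $C_G(N)$ is normal in $G$ and meets $N$ trivially (as $Z(N)=1$), uniqueness of $N$ forces $C_G(N)=1$, so $G$ embeds into $\Aut(N)=\Aut(S)\wr\mathrm{Sym}(k)$, while $G/N$ remains solvable.

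The central task is then to show that such a $G$ cannot be a $\mathcal{B}$-group. Since $d(G)$ is by definition the smallest size of a minimal generating set, it suffices to exhibit a single minimal generating set of size strictly larger than $d(G)$. I would first bound $d(G)$ from above. Fix a minimal generating set of $G/N$ and lift it to elements $g_1,\dots,g_r\in G$, so that $H:=\langle g_1,\dots,g_r\rangle$ satisfies $HN=G$. As $N$ is a minimal normal subgroup, for any nontrivial $H$-invariant subgroup $T\leq N$ the $N$-normal closure $\langle T^N\rangle$ is normalized by both $N$ and $H$, hence by $HN=G$, and therefore equals $N$. This controls precisely how many elements of $N$ must be adjoined to the $g_i$ in order to reach $G$, and yields an explicit upper bound for $d(G)$ in terms of $d(G/N)$ and the structure of $N$ as a module for the $H$-action.

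To exceed this bound I would exploit that the non-abelian chief factor $N$ is far too flexible to be generated rigidly. A non-abelian finite simple group $S$ is never a $\mathcal{B}$-group: it admits independent generating sets of size greater than $d(S)=2$, and one can string together such independent data across the $k$ factors of $N=S^k$ to build long independent sequences inside $N$ that remain independent after adjoining a lift of a generating set of $G/N$. Splicing such a long sequence onto the lifted generators, and checking irredundancy at every stage (no element lying in the subgroup generated by the others), produces a minimal generating set of $G$ whose size exceeds the upper bound for $d(G)$ obtained above. Consequently $m(G)>d(G)$, contradicting the assumption that $G$ has property $\mathcal{B}$.

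The main obstacle is exactly this last construction: controlling minimal generating sets across the extension $1\to N\to G\to G/N\to 1$ and guaranteeing both the irredundancy of the long sequence and that its length genuinely exceeds $d(G)$. This requires quantitative information about independent sets in the non-abelian socle $N=S^k$ and about its interaction with the solvable group $G/N$ acting on it, and is the point at which input on the subgroup structure of finite simple groups (ultimately the classification) enters. Everything before it — the reduction to a unique non-abelian minimal normal subgroup with trivial centralizer and solvable quotient — is formal and relies only on Proposition~\ref{pro:quotients} together with minimality.
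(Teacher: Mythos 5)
Your opening reduction is fine and matches the first half of the paper's proof: minimal counterexample, every proper quotient solvable by Proposition~\ref{pro:quotients}, hence a unique minimal normal subgroup $N$, which is non-abelian with $C_G(N)=1$ (you omit only the paper's observation that $G$ is Frattini-free, since $\Phi(G)$ is nilpotent and $G$ has no non-trivial solvable normal subgroup). The genuine gap is everything after that: the ``central task'' is left as a plan, and the plan's key step would fail. An independent set assembled across the $k$ factors of $N\cong S^k$ does \emph{not} ``remain independent after adjoining a lift of a generating set of $G/N$.'' Because $N$ is a minimal normal subgroup, $G$ permutes the $k$ simple factors transitively, so among your lifted generators there are elements whose conjugation action carries one factor to another; the part of your long sequence lying in the factors $S_2,\ldots,S_k$ then sits inside the subgroup generated by the lifts together with the part in $S_1$, and becomes redundant wholesale. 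Even for $k=1$ the claim is unjustified: three independent involutions generating $S$ typically lose independence once a lift $g$ is adjoined, since two of them together with $g$ will already generate $G$. Irredundancy is not stable under unions, and ``checking irredundancy at every stage'' is precisely the hard content you have not supplied — as you yourself concede in the final paragraph. Your upper bound on $d(G)$ is likewise only gestured at; the statement you need, $d(G)\leq d(G/N)+1$, is Lucchini's theorem \cite{L1}, which one should cite rather than rederive.

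The paper escapes this trap by arguing in the opposite direction: instead of building long independent sets, it forces $d(G)$ \emph{down}. Lemmas~\ref{lem:plus-one} and~\ref{lem:d-equals-2}, which rest on Lucchini and Lucchini--Menegazzo \cite{L1,L2} applied to the Frattini-free $\mathcal{B}$-group $G$ with unique minimal normal subgroup $N$, yield $d(G)=2$ and $G/N$ cyclic of prime-power order. After that one only needs $m(G)\geq 3$, and Lemma~\ref{lem:d-geq-3} produces a minimal generating set of size at least $3$ by a concrete construction: a Sylow-$q$ subgroup $Q\leq N$, a maximal subgroup $H$ containing a Sylow-$p$ subgroup, and an element $g\in N_G(Q)\cap H$ of $p$-power order with $\langle g\rangle N=G$ (Frattini argument). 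A minimal generating set extracted from $Q\cup(H\cap N)\cup\{g\}$ must meet all three parts, because each pairwise union generates a proper subgroup (contained in $N$, in $N_G(Q)$, and in $H$, respectively). Note how irredundancy is certified by exhibiting proper overgroups rather than by inspecting long sequences element by element; that is the mechanism your proposal is missing, and it is where the CFSG input is actually organized.
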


From these structural results we obtain a characterization of finite
$\mathcal{B}$-groups, based on a complete classification of
Frattini-free finite $\mathcal{B}$-groups.  For any prime $p$ we
denote by $\F_p$ the field with $p$ elements.

\begin{theorem} \label{thm:Frattini-free} 
  Let $G$ be a finite group.  Then $G$ is a Frattini-free
  $\mathcal{B}$-group if and only if one of the following holds:
  \begin{enumerate}
  \item $G$ is an elementary abelian $p$-group for some prime $p$;
  \item $G = P \rtimes Q$, where $P$ is an elementary abelian
    $p$-group and $Q$ is a non-trivial cyclic $q$-group, for distinct
    primes $p \ne q$, such that $Q$ acts faithfully on $P$ and the
    $\F_p Q$-module $P$ is a direct sum of isomorphic copies of one
    simple module.
  \end{enumerate}
\end{theorem}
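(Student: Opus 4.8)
The plan is to prove both implications, the reverse (``if'') direction being essentially a generation count and the forward (``only if'') direction a structural rigidity argument resting on Theorem~\ref{thm:solvable} and Proposition~\ref{pro:quotients}.

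For the \emph{reverse direction} I would first dispose of~(1): an elementary abelian $p$-group is an $\F_p$-vector space, its minimal generating sets are exactly its bases, and $\Phi(G)=1$. For~(2) I would write $G = V \rtimes Q$ with $V = W^{k}$ and set $D = \mathrm{End}_{\F_pQ}(W)$, a finite field; since $Q$ is cyclic and hence abelian, $W$ is one-dimensional over $D$, so $V \cong D^{k}$ and the $Q$-submodules of $V$ are precisely the $D$-subspaces. Frattini-freeness follows because the submodules $\bigoplus_{j\ne i} W_j$ give maximal subgroups $\bigl(\bigoplus_{j\ne i}W_j\bigr)\rtimes Q$ whose intersection meets $V$ trivially, while $\Phi(G)\le F(G)=V$. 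The crucial point is the generation count: because $Q$ is a cyclic $q$-group its subgroup lattice is a chain, so any generating set of $G$ must contain an element $g$ whose image generates $Q$; modifying the remaining generators by powers of $g$ pushes them into $V$, and one checks that the set generates $G$ precisely when these $V$-parts span $D^{k}$. Hence every irredundant generating set consists of one such $g$ together with an irredundant $D$-spanning subset of $D^{k}$, i.e.\ a $D$-basis; all of these have size $1+k$, so $G$ is a $\mathcal{B}$-group.

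For the \emph{forward direction} I would start from Theorem~\ref{thm:solvable}: a solvable Frattini-free group satisfies $G = F \rtimes H$ with $F = F(G) = \Soc(G)$ a direct product of elementary abelian groups and $C_G(F) = F$, so $H \cong G/F$ acts faithfully on $F$. If $G$ is abelian then $H \le C_G(F) = F$ forces $H = 1$, $G = F$ is a direct product of elementary abelian groups over various primes, and a short comparison of $d$ with $m$ shows property~$\mathcal{B}$ holds only for a single prime, giving case~(1). Assume now $G$ is non-abelian, so $H \ne 1$; by Proposition~\ref{pro:quotients} the group $H$ is itself a $\mathcal{B}$-group. Once we know $H$ is a cyclic $q$-group the reverse-direction analysis applies verbatim with $Q = H$: the reduction to ``one generator of $H$ plus a module-generating subset of $F$'' goes through, and property~$\mathcal{B}$ becomes equivalent to the assertion that all irredundant $H$-module generating sets of $F$ have the same size, i.e.\ that the lattice of $H$-submodules of $F$ behaves like a matroid. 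Since two non-isomorphic simple constituents (in particular two distinct primes dividing $|F|$) immediately produce irredundant module generating sets of different sizes, this forces $F$ to be $H$-homogeneous, a direct sum of copies of a single simple $\F_pH$-module; faithfulness is already in hand, and $q = p$ is excluded because a non-abelian Frattini-free $p$-group cannot exist. This yields exactly case~(2).

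The main obstacle is proving that $H$ must be cyclic of prime-power order. Proposition~\ref{pro:quotients} already rules out the cyclic but non-prime-power case, since such groups (e.g.\ $\Z/6$) are not $\mathcal{B}$-groups; the delicate case is a non-cyclic $H$ that happens to be a $\mathcal{B}$-group in its own right, such as an elementary abelian $q$-group. Here the chain property of the subgroup lattice fails, so a generating set of $H$ need not contain a single generator, and one can trade one ``full'' generator against several prime-power pieces while simultaneously rebalancing the module contribution from $F$. The heart of the argument is to show that, because $F\neq 1$ and $H$ acts faithfully, this trade-off genuinely alters the total size of some irredundant generating set of $G$, producing minimal generating sets of two distinct sizes and contradicting property~$\mathcal{B}$. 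The explicit example $\AGL(1,7) = \F_7 \rtimes (\Z/6)$, which admits irredundant generating sets of sizes $2$ and $3$, is the model for this phenomenon and shows that controlling the interaction between module generation and a non-chain subgroup lattice of $H$ is precisely where the real work lies.
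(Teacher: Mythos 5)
Your reverse direction and the homogeneity part of the forward direction are essentially sound. The reduction of an arbitrary minimal generating set to ``one element mapping onto a generator of $Q$ plus elements of $P$'' does work because $Q$ is a cyclic $q$-group, and the count then becomes a count of $D$-bases for $D=\mathrm{End}_{\F_p Q}(W)$; the only point glossed over by ``one checks'' is that $\langle g, u_2,\dots,u_m\rangle = G$ iff the $u_i$ span, which requires the vanishing of the norm $1+z+\dots+z^{q^k-1}$ on $V$ --- true here because every non-trivial element of $Q$ acts fixed-point-freely on a faithful homogeneous module. (The paper simply cites \cite[\S 3]{MBQ} for this direction.) Likewise, your observation that non-isomorphic simple constituents yield minimal generating sets of different sizes is exactly the paper's homogeneity argument: merge $x_i$ and $x_j$ into $x_ix_j$ when $M_i\not\cong M_j$.

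The genuine gap is precisely the step you yourself flag as ``the main obstacle'': you never prove that the complement $H\cong G/\Soc(G)$ is a cyclic group of prime-power order. What you offer is a description of what a proof would have to accomplish, plus the example $\AGL(1,7)$ --- but that example is the easy case ($H$ cyclic of non-prime-power order, already excluded by Propositions~\ref{pro:quotients} and~\ref{pro:cyclic-B}). The hard case, which your trade-off heuristic does not touch, is a non-cyclic $\mathcal{B}$-group $H$ (an elementary abelian $q$-group, or indeed any $p$-group) acting faithfully on the semisimple module $F$; for instance one must rule out groups like $(C_3\times C_3)\rtimes(C_2\times C_2)\cong S_3\times S_3$, and it is not at all clear this can be done by elementary rebalancing of generators in general. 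The paper settles it by passing to the quotient $\bar G = G/(M_2\times\dots\times M_d)$, showing $\bar M_1$ is the \emph{unique} minimal normal subgroup of the Frattini-free $\mathcal{B}$-group $\bar G$, and then invoking Lemma~\ref{lem:d-equals-2}, whose proof combines Lemma~\ref{lem:plus-one} with the Lucchini--Menegazzo theorem $d(\bar G)=\max\{2,d(\bar G/\bar M_1)\}$ from \cite{L2} (a CFSG-dependent result) to force $d(\bar Q)=1$, after which prime-power order follows from Propositions~\ref{pro:quotients} and~\ref{pro:cyclic-B}. Without this input, or some substitute for the Lucchini--Menegazzo generation bound, your forward direction does not go through.
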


\begin{rem} \label{rem:explicit} This means that there are no
  Frattini-free finite $\mathcal{B}$-groups beyond the examples
  constructed in~\cite[\S3]{MBQ}.  Indeed, the groups listed in (2) of
  Theorem~\ref{thm:Frattini-free} can be concretely realized as
  `semidirect products via multiplication in finite fields of
  characteristic~$p$': the simple module in question is of the form
  $\F_p(\zeta)$, the additive group of a finite field generated by a
  $q^k$th root of unity $\zeta$ over~$\F_p$, with a generator $z$ of
  $Q$ acting on $\F_p(\zeta)$ as multiplication by~$\zeta$.
\end{rem}

Using the explicit description of Frattini-free finite
$\mathcal{B}$-groups, we determine the automorphism groups of such
groups; see Theorem~\ref{thm:automorphism-group}.  From
McDougall-Bagnall and Quick's results in~\cite{MBQ} we obtain a
characterization of finite $\mathcal{B}$-groups.
 
\begin{theorem} \label{thm:finite-B-group}
  Let $G$ be a finite group.  Then $G$ is a $\mathcal{B}$-group if and
  only if one of the following holds:
  \begin{enumerate}
  \item $G$ is a $p$-group for some prime $p$;
  \item $G = P \rtimes Q$, where $P$ is a $p$-group and $Q$ is a
    cyclic $q$-group for, distinct primes $p \ne q$, such that $C_Q(P)
    \ne Q$ and every non-trivial element of $Q/C_Q(P)$ acts
    fixed-point-freely on $P/\Phi(P)$.
  \end{enumerate}
  Moreover, in case $\textup{(2)}$ one has $\Phi(G) = \Phi(P) \times C_Q(P)$.
\end{theorem}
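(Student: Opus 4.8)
The plan is to push everything down to the Frattini quotient. Since $d(G) = d(G/\Phi(G))$ and $m(G) = m(G/\Phi(G))$, the group $G$ is a $\mathcal{B}$-group if and only if $\bar{G} := G/\Phi(G)$ is one; as $\bar{G}$ is Frattini-free, Theorem~\ref{thm:Frattini-free} applies and says that $G$ is a $\mathcal{B}$-group precisely when $\bar{G}$ is of type (1) or (2) there. It thus suffices to match these two possibilities with (1) and (2) of the present statement, i.e.\ to control how the structure of $\bar G$ lifts to $G$; by Proposition~\ref{pro:quotients} one may also pass freely to convenient quotients, and Theorem~\ref{thm:solvable} underlies the classification being invoked. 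The case of type (1) is immediate: if $\bar G$ is an elementary abelian $p$-group, then a Sylow $p$-subgroup $P$ has $P\Phi(G)/\Phi(G) = \bar G$, so $G = P\Phi(G) = \langle P\rangle \Phi(G)$, whence $G = P$ because $\Phi(G)$ consists of non-generators; thus $G$ is a $p$-group, and conversely every $p$-group is a $\mathcal{B}$-group by the Burnside basis theorem.

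For type (2) the real work is to produce the semidirect decomposition of the required shape. First, $G$ is a $\{p,q\}$-group: a Sylow $r$-subgroup $R$ with $r \notin \{p,q\}$ maps trivially into the $\{p,q\}$-group $\bar G$, so $R \le \Phi(G)$ and $R$, being characteristic in the nilpotent group $\Phi(G)$, is normal in $G$; a Schur--Zassenhaus complement $H$ then gives $G = RH = H\Phi(G)$, so $G = H$ and $R = 1$. Next, $\bar G$ has normal Sylow $p$-subgroup $\bar P$; applying the Frattini argument to the preimage of $\bar P$ and again using the non-generator property of $\Phi(G)$ forces the Sylow $p$-subgroup $P$ of $G$ to be normal, and Schur--Zassenhaus supplies a complement $Q$, so $G = P \rtimes Q$. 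Here $Q$ is cyclic: the kernel $Q \cap P\Phi(G)$ of $Q \twoheadrightarrow \bar Q$ lies in $\Phi(Q)$, since each maximal subgroup $Q_0 < Q$ yields a maximal subgroup $PQ_0 \ge P\Phi(G)$ of $G$ with $Q \cap PQ_0 = Q_0$ by Dedekind's law, so $Q/\Phi(Q)$ is a quotient of the cyclic group $\bar Q$. Finally $C_Q(P) = O_q(G)$ and is central, because the normal $q$-subgroup $O_q(G)$ and the normal $p$-subgroup $P$ have coprime order, giving $[O_q(G),P] = 1$.

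It remains to match the module conditions and compute $\Phi(G)$. As $P \trianglelefteq G$ has a complement of coprime order, $V := P/\Phi(P)$ is a completely reducible $\F_p\bar Q$-module (Maschke), so its radical is trivial; each maximal submodule pulls back to a maximal subgroup of $G$ containing $Q$, and the intersection of these meets $P$ in $\Phi(P)$, whence $\Phi(G) \cap P = \Phi(P)$ and $\bar P \cong P/\Phi(P) = V$ as $\F_p\bar Q$-modules. The two descriptions of the action now agree via a computation over the cyclic group $\bar Q = \langle z\rangle$ of order $q^k$: the simple $\F_p\bar Q$-modules correspond to Galois orbits of $q^k$-th roots of unity over $\F_p$, with $z$ acting on the module $S_d$ attached to primitive $q^d$-th roots as multiplication by such a root, and $z^j \ne 1$ fixes a non-zero vector of $S_d$ iff $q^d \mid j$; hence ``$V$ is a direct sum of copies of one simple module on which $\bar Q$ acts faithfully'' is equivalent to ``every $S_d$ occurring has $d=k$'', which is exactly ``every non-trivial element of $\bar Q$ acts fixed-point-freely on $V$''. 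This same fixed-point-freeness makes $z^j - 1$ invertible on $V$ for $z^j \in Q \setminus C_Q(P)$, which via the conjugate complements $Q^g$ shows that no such $z^j$ lies in $\Phi(G)$; with $C_Q(P) = O_q(G) \le \Phi(G)$ this yields $\Phi(G) = \Phi(P) \times C_Q(P)$, and $C_Q(P) \ne Q$ corresponds to $\bar Q \ne 1$. All these equivalences are reversible, so conversely a group of the form (1) or (2) has Frattini quotient of type (1) or (2) in Theorem~\ref{thm:Frattini-free} and is therefore a $\mathcal{B}$-group.

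I expect the main obstacle to be the exact determination of $\Phi(G)$ --- in particular that its $q$-part is \emph{precisely} $C_Q(P)$ and no larger. This is the point at which the fixed-point-free hypothesis is genuinely used (through invertibility of $z^j-1$) and where one must keep careful track of the conjugate complements $Q^g$ rather than working with a single $Q$. Together with the module dictionary relating homogeneity-plus-faithfulness to fixed-point-freeness, this is the crux that converts the intrinsic classification of Theorem~\ref{thm:Frattini-free} into the concrete form stated here.
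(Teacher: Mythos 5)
Your overall strategy (reduce to the Frattini quotient, then lift the structure given by Theorem~\ref{thm:Frattini-free}) is the same as the paper's, and your structural lifting is sound and in fact more self-contained than the paper's: where you prove by hand that $G$ is a $\{p,q\}$-group, that the Sylow $p$-subgroup $P$ is normal, that a Schur--Zassenhaus complement $Q$ is cyclic, and that $C_Q(P)=O_q(G)$ and $\Phi(G)\cap P=\Phi(P)$, the paper simply outsources this passage between $G$ and $G/\Phi(G)$ to \cite[\S 3]{MBQ}. The proof breaks, however, exactly at the step you yourself call the crux: the ``module dictionary''. Your fixed-point computation on a single simple module is correct, but your indexing of the simple $\F_p\bar Q$-modules by the level $d$ alone is not. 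Simple modules correspond to orbits of the Galois action of $\langle p\rangle$ on the $q^k$th roots of unity, and in general there are \emph{several} orbits consisting of primitive $q^k$th roots --- precisely $\phi(q^k)/\mathrm{ord}_{q^k}(p)$ of them --- hence several pairwise non-isomorphic \emph{faithful} simple modules. Consequently ``every non-trivial element of $\bar Q$ acts fixed-point-freely on $V$'' says only that every simple summand of $V$ is faithful; it does not force all summands to be isomorphic, which is what Theorem~\ref{thm:Frattini-free}(2) demands. The equivalence you assert is therefore false in the direction needed for the ``if'' part of the statement (the other direction, homogeneous-and-faithful implies fixed-point-free, is fine, so your ``only if'' argument survives).

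Moreover, this gap cannot be closed, because the implication ``(2) $\Rightarrow$ $\mathcal{B}$-group'' fails with hypothesis (2) as printed. Take $p=11$, $q=5$, $P=\F_{11}^2$ with standard basis $e_1,e_2$, and $Q=\langle z\rangle$ of order $5$ acting as the diagonal matrix $\mathrm{diag}(3,9)$ (both $3$ and $9$ have order $5$ in $\F_{11}^\times$). Every non-trivial power of $z$ acts with both eigenvalues $\ne 1$, so (2) holds with $C_Q(P)=1$ and $\Phi(P)=1$; yet $G=P\rtimes Q$ has the minimal generating set $\{e_1+e_2,\,z\}$ of size $2$ (since $e_1+e_2$ and $z\cdot(e_1+e_2)=3e_1+9e_2$ span $P$) as well as the minimal generating set $\{e_1,e_2,z\}$ of size $3$ (each of $\langle e_1,e_2\rangle=P$ and $\langle e_i,z\rangle=\F_{11}e_i\rtimes Q$ is proper), so $G$ is not a $\mathcal{B}$-group. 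This is exactly what Theorem~\ref{thm:Frattini-free} predicts, since here $P$ is the sum of two non-isomorphic simple modules. Note that the same example is equally problematic for the paper's own proof of this direction, which reduces to ``$P$ elementary abelian and every non-trivial element of $Q$ fixed-point-free on $P$'' before invoking \cite{MBQ}: that reduction retains strictly less information than the homogeneity required by Theorem~\ref{thm:Frattini-free}. The conclusion to draw is that hypothesis (2) must be strengthened --- to homogeneity of $P/\Phi(P)$ as an $\F_p[Q/C_Q(P)]$-module, equivalently to the field-multiplication form of Remark~\ref{rem:explicit} --- and with that change your lifting arguments would go through; as written, though, your proof resolves its self-identified crux with a false equivalence.
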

 
As applications of Theorems~\ref{thm:Frattini-free}
and~\ref{thm:Frattini-free} we provide new, streamlined proofs for the
characterization of finite matroid groups (cf.~\cite{SV1}) and the
classification of finite groups with the basis property
(cf.~\cite{MBQ}).  Furthermore, we record as
Corollary~\ref{cor:m-at-most-2} a description of finite groups $G$
with $m(G) \leq 2$.

The proofs of our main results rely ultimately on consequences of the
Classification of Finite Simple Groups.  These enter our proofs
directly as well as indirectly, namely via results of Lucchini and
Menegazzo on generation properties of finite groups with a unique
minimal normal subgroup; see \cite{L1} and \cite{L2}.

\medskip

An outline of the paper is as follows.
Proposition~\ref{pro:quotients} and Theorem~\ref{thm:solvable} are
proved in Section~\ref{sec:2}.  Theorems~\ref{thm:Frattini-free} and
\ref{thm:finite-B-group} are proved in
Section~\ref{sec:3}. Theorem~\ref{thm:automorphism-group} in
Section~\ref{sec:4} describes the automorphism group of a
Frattini-free $\mathcal{B}$-group.  In Sections~\ref{sec:5}
and~\ref{sec:6} we use our main results to derive a characterization
of finite matroid groups and a classification of finite groups with
the basis property.

 
\section{Quotients and solvability of
  $\mathcal{B}$-groups} \label{sec:2}


\begin{proof}[Proof of Proposition~\ref{pro:quotients}]
  Let $G$ be a finite $\mathcal{B}$-group with normal subgroup~$N$,
  and let $\pi \colon G \rightarrow G/N$ denote the projection
  homomorphism.  Writing $d = d(G/N)$, we choose $x_1, \ldots, x_d \in
  G$ such that $x_1^\pi, \ldots, x_d^\pi$ is a minimal generating
  sequence of~$G/N$.

  For a contradiction, assume that $G/N$ does not have
  property~$\mathcal{B}$: let $\bar y_1, \ldots, \bar y_e$ be a
  minimal generating sequence of $G/N$ with $e>d$.  Express each
  element $\bar y_i$ as a word in $x_1^\pi , \ldots, x_d^\pi$ and then
  let $y_i$ denote the same word in $x_1, \ldots, x_d$ so that
  $y_i^\pi = \bar y_i$.  Since $\langle y_1^\pi, \ldots, y_e^\pi
  \rangle = G/N$, there is a sequence $z_1, \ldots, z_f$ in $N$ such
  that $y_1, \ldots, y_e, z_1, \ldots, z_f$ minimally generates~$G$.
  However, the strictly shorter sequence $x_1, \ldots, x_d, z_1,
  \ldots, z_f$ also generates $G$, contradicting the fact that $G$ is
  a $\mathcal{B}$-group.
\end{proof}

\begin{prop} \label{pro:cyclic-B} Let $G$ be a finite group.
  \begin{enumerate}
  \item Suppose that $G$ is simple.  Then $G$ has
    property~$\mathcal{B}$ if and only if $G$ is cyclic.
  \item Suppose that $G$ is cyclic.  Then $G$ has
    property~$\mathcal{B}$ if and only if $G$ has prime-power order.
  \end{enumerate}
\end{prop}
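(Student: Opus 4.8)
The plan is to treat the two parts separately, arguing via the contrapositive where convenient. For part (2), consider a cyclic group $G$. If $\lvert G\rvert = p^a$ is a prime power, then $G$ is a $p$-group and hence a $\mathcal{B}$-group by the Burnside basis theorem; alternatively one notes directly that $G$ has a unique maximal subgroup, so that every generating set must contain a generator of $G$, forcing $d(G)=m(G)=1$. Conversely, suppose $\lvert G\rvert$ is divisible by at least two distinct primes. By the Chinese Remainder Theorem, $G$ decomposes as the direct product of its nontrivial primary components $P_1,\dots,P_r$ with $r\ge 2$, and choosing a generator $g_i$ of each $P_i$ yields a generating set $\{g_1,\dots,g_r\}$. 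This set is minimal, since discarding $g_i$ leaves a subgroup missing the $p_i$-part; as $G$ is cyclic we also have $d(G)=1$, so $m(G)\ge r>1=d(G)$ and $G$ fails property~$\mathcal{B}$.

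For part (1), the easy implication is immediate: a simple cyclic group has prime order and is therefore a $\mathcal{B}$-group by part (2) (or again by Burnside). For the converse I would argue contrapositively, showing that a simple group which is not cyclic---equivalently, a non-abelian simple group---is never a $\mathcal{B}$-group. First I would record that $d(G)=2$, the two-generation of finite simple groups. It then suffices to exhibit a single minimal generating set of size at least~$3$. The key observation is that $G$ has even order and is consequently generated by its involutions: the set of involutions is closed under conjugation, so it generates a nontrivial normal subgroup, which equals $G$ by simplicity. Passing to a minimal generating subset $S$ contained in the involutions, I would note that $\lvert S\rvert\neq 2$, because any two involutions generate a dihedral---in particular solvable---subgroup, whereas $G$ is not solvable; since plainly $\lvert S\rvert\neq 1$, we conclude $\lvert S\rvert\ge 3$. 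Hence $m(G)\ge 3>2=d(G)$, so $G$ is not a $\mathcal{B}$-group.

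The main obstacle is the input $d(G)=2$: the fact that every finite non-abelian simple group is two-generated rests on the Classification of Finite Simple Groups, in keeping with the paper's stated reliance on it. By contrast, the production of the size-$3$ minimal generating set is entirely elementary, requiring only that non-abelian simple groups have even order (Feit--Thompson) and that a pair of involutions generates a dihedral group. Thus everything in part (2), together with the backward implication of part (1), is elementary, and the full weight of the argument sits in this one classification-level fact.
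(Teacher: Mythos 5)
Your proof is correct and follows essentially the same route as the paper: the paper also deduces $d(G)=2$ from the Classification, obtains $m(G)\geq 3$ from the fact that a non-abelian simple group is generated by involutions, and handles the cyclic case via the primary decomposition. You have merely made explicit the details the paper leaves to the reader (even order via Feit--Thompson, and that two involutions generate a dihedral, hence solvable, subgroup), so the two arguments coincide in substance.
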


\begin{proof}
  (1) Suppose that $G$ is a non-abelian simple group.  Then the
  classification of finite simple groups implies that $d(G) = 2$
  whereas $m(G) \geq 3$.  The latter follows, for instance, from the
  fact that $G$ is generated by involutions.

  (2) As $G$ is cyclic, $d(G) = 1$ and the primary decomposition of $G$
  shows that $m(G)$ is equal to the number of primes dividing~$\lvert
  G \rvert$.
\end{proof}

The next three lemmas follow from results of Lucchini and Menegazzo in
\cite{L1} and~\cite{L2}.  Their work relies on the Classification of
Finite Simple Groups.

\begin{lemma} \label{lem:plus-one}
  Let $G$ be a finite $\mathcal{B}$-group with a minimal normal
  subgroup~$N$.  Then 
  \begin{equation*} d(G) =
    \begin{cases}
      d(G/N) & \text{if $N \leq \Phi(G)$,} \\
      d(G/N) + 1 & \text{otherwise.}
    \end{cases}
  \end{equation*}
\end{lemma}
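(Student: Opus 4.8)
The plan is to bracket $d(G)$ between $d(G/N)$ and $d(G/N)+1$ and then let the $\mathcal{B}$-property decide which value occurs. The upper bound $d(G)\le d(G/N)+1$ for a minimal normal subgroup $N$ is precisely the input I draw from the work of Lucchini and Menegazzo in \cite{L1} and \cite{L2}; it is also the only place where the Classification of Finite Simple Groups enters, via the non-abelian case $N\cong S^k$, and I take it as given. The lower bound $d(G/N)\le d(G)$ is immediate since $G/N$ is a quotient of $G$. Thus everything reduces to distinguishing the two cases.

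For the case $N\le\Phi(G)$ I would argue, without using the $\mathcal{B}$-property, that $N$ consists of non-generators. Writing $\pi\colon G\to G/N$ for the projection, one checks for every subset $X\subseteq G$ that $\langle X\rangle=G$ if and only if $\langle X,N\rangle=G$ (any proper $\langle X\rangle$ lies in a maximal subgroup, which contains $\Phi(G)\ge N$), and the latter holds if and only if $\langle X^\pi\rangle=G/N$. Reading off minimal sizes gives $d(G)=d(G/N)$ at once.

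The substantive case is $N\not\le\Phi(G)$, where I must show $d(G)=d(G/N)+1$; by the bracketing it suffices to rule out $d(G)=d(G/N)$. Here I exploit that $G$ and, by Proposition~\ref{pro:quotients}, also $G/N$ are $\mathcal{B}$-groups, so $d(G)=m(G)$ and $d(G/N)=m(G/N)$; it therefore suffices to produce a \emph{minimal} generating set of $G$ of size $m(G/N)+1$. Since $N\not\le\Phi(G)$, there is a maximal subgroup $M$ with $G=MN$, and as $MN=G$ the map $\pi$ restricts to a surjection $M\to G/N$. I take a minimal generating set $\bar y_1,\dots,\bar y_e$ of $G/N$ of maximal size $e=m(G/N)$ and lift it to elements $y_1,\dots,y_e\in M$; then $H=\langle y_1,\dots,y_e\rangle\le M$ is proper while $HN=G$. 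Choosing a generating set of $G$ of least cardinality among those of the form $\{y_1,\dots,y_e\}\cup Z$ with $Z\subseteq N$, say $\{y_1,\dots,y_e,z_1,\dots,z_f\}$, forces $f\ge 1$ because $H\ne G$.

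Finally I would verify that this set is genuinely minimal, which is the only delicate bookkeeping point. Removing any $z_j$ fails to generate by the least-cardinality choice of $Z$, while removing any $y_i$ fails because its $\pi$-image $\langle\bar y_1,\dots,\widehat{\bar y_i},\dots,\bar y_e\rangle$ is a proper subgroup of $G/N$ (the $\bar y_i$ forming a minimal generating set), so the subgroup it generates in $G$ is proper as well. Hence $m(G)\ge e+1=m(G/N)+1$, and combining this with the upper bound yields $d(G/N)+1\le m(G/N)+1\le m(G)=d(G)\le d(G/N)+1$, forcing $d(G)=d(G/N)+1$. I expect the main obstacle to be not this construction but securing the external upper bound $d(G)\le d(G/N)+1$ uniformly across abelian and non-abelian minimal normal subgroups; once that is in hand, the rest is elementary.
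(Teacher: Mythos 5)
Your proof is correct, and it shares the paper's skeleton: both take Lucchini's bound $d(G)\le d(G/N)+1$ from \cite{L1} as the sole external input and let property~$\mathcal{B}$ decide between the two possible values. The difference lies in how the dichotomy is settled. The paper proves the contrapositive of your hard case in one stroke: if $d(G)=d(G/N)$, then every minimal generating sequence of $G$ (all of which have size $d(G)$, by property~$\mathcal{B}$) projects onto a minimal generating sequence of $G/N$, so no element of $N$ occurs in any irredundant generating set of $G$, whence $N\le\Phi(G)$ by the standard characterization of the Frattini subgroup as the set of elements omitted from every irredundant generating set. You argue in the direct direction: from $N\not\le\Phi(G)$ you extract a maximal subgroup $M$ with $G=MN$, lift a minimal generating set of $G/N$ into $M$, and pad it with a cardinality-minimal subset of $N$ to manufacture an irredundant generating set of $G$ of size at least $m(G/N)+1$; your verification of irredundancy (removal of a $z_j$ blocked by minimality of $Z$, removal of a $y_i$ blocked by passing to the quotient, where the $z_j$ vanish) is sound. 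What each buys: the paper's argument is shorter because it outsources work to the Frattini characterization, while yours is more constructive and is essentially the same lift-and-pad device the paper itself uses to prove Proposition~\ref{pro:quotients}. Two small economies are available to you: the appeal to Proposition~\ref{pro:quotients} is unnecessary, since your chain of inequalities only needs the trivial bound $d(G/N)\le m(G/N)$ rather than equality; and the construction works just as well starting from any minimal generating set of $G/N$, of size $e\ge d(G/N)$, rather than one of maximal size.
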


\begin{proof}
  If $d(G) = d(G/N)$ then every minimal generating sequence of $G$
  projects to a minimal generating sequence of $G/N$; so elements of
  $N$ never appear in a minimal generating sequence of $G$, that is $N
  \leq \Phi(G)$.  Conversely, if $N \leq \Phi(G)$, then $d(G) =
  d(G/N)$.  On the other hand, if $d(G) > d(G/N)$, then $d(G) = d(G/N)
  + 1$ because $d(G) \leq d(G/N) + 1$ from~\cite{L1}.
\end{proof}

\begin{lemma} \label{lem:d-equals-2}
  Let $G$ be a non-cyclic, Frattini-free finite $\mathcal{B}$-group
  with a unique minimal normal subgroup~$N$.  Then $d(G) = 2$ and
  $G/N$ is cyclic of prime-power order.
\end{lemma}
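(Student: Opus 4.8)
The plan is to pin down $d(G)$ exactly by combining the additivity recorded in Lemma~\ref{lem:plus-one} with the sharp Lucchini--Menegazzo generation bound available for groups with a \emph{unique} minimal normal subgroup, and then to read off the structure of $G/N$ from the fact that quotients of $\mathcal{B}$-groups are again $\mathcal{B}$-groups.

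First I would exploit Frattini-freeness. Since $N$ is a non-trivial minimal normal subgroup and $\Phi(G) = 1$, we have $N \not\leq \Phi(G)$, so Lemma~\ref{lem:plus-one} gives $d(G) = d(G/N) + 1$. As $G$ is non-cyclic, $d(G) \geq 2$, and hence $d(G/N) \geq 1$; in particular $N \neq G$.

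The decisive step is to bound $d(G)$ from above. Here I would invoke the result of Lucchini and Menegazzo (see \cite{L2}) that a finite group with a unique minimal normal subgroup $N$ satisfies $d(G) \leq \max\{2, d(G/N)\}$. Feeding in the equality $d(G) = d(G/N) + 1$ from the previous step yields $d(G/N) + 1 \leq \max\{2, d(G/N)\}$. If $d(G/N) \geq 2$ held, the right-hand side would equal $d(G/N)$, which is absurd; therefore $d(G/N) \leq 1$, and together with $d(G/N) \geq 1$ we conclude $d(G/N) = 1$ and $d(G) = 2$. Thus $G/N$ is cyclic.

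Finally, by Proposition~\ref{pro:quotients} the quotient $G/N$ is itself a $\mathcal{B}$-group, and being cyclic it has prime-power order by Proposition~\ref{pro:cyclic-B}(2), which completes the argument. The only genuinely hard input is the upper bound used in the third step: it rests on the Lucchini--Menegazzo analysis, and ultimately on the Classification of Finite Simple Groups. The one point that must be checked with care is that the hypothesis of a \emph{unique} minimal normal subgroup is precisely what licenses the sharp bound $\max\{2, d(G/N)\}$, as opposed to the weaker estimate $d(G) \leq d(G/N)+1$ that holds in general (cf.\ \cite{L1}); without uniqueness the argument collapses.
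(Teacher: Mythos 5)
Your proof is correct and follows essentially the same route as the paper: the paper likewise combines Lemma~\ref{lem:plus-one} (giving $d(G)=d(G/N)+1$ since $\Phi(G)=1$) with the Lucchini--Menegazzo result from \cite{L2} (cited there as the equality $d(G)=\max\{2,d(G/N)\}$) to force $d(G)=2$ and $d(G/N)=1$, then applies Propositions~\ref{pro:quotients} and~\ref{pro:cyclic-B} to get prime-power order. Your version is just slightly more explicit in spelling out the case analysis and in noting that only the inequality $d(G)\leq\max\{2,d(G/N)\}$ is needed.
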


\begin{proof}
  From~\cite{L2} we have $d(G) = \max \{2, d(G/N)\}$ and
  Lemma~\ref{lem:plus-one} yields $d(G) = d(G/N) + 1$.  Thus $d(G) =
  2$ and $d(G/N) = 1$ so that $G/N$ is cyclic.  Moreover,
  Propositions~\ref{pro:quotients} and~\ref{pro:cyclic-B} imply that
  $G/N$ has prime-power order.
\end{proof}

\begin{lemma} \label{lem:d-geq-3} Let $G$ be a Frattini-free finite
  $\mathcal{B}$-group with a non-abelian minimal normal subgroup~$N$.
  If $G/N$ is cyclic, then $m(G) \geq 3$.
\end{lemma}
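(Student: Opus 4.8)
The plan is to produce an explicit independent (i.e.\ minimal) generating set of size three, which gives $m(G)\ge 3$. Write $N=S_1\times\cdots\times S_k$, where the $S_i$ are pairwise isomorphic copies of a non-abelian simple group $S$, and fix $g\in G$ whose image generates the cyclic group $G/N$. Since $N$ is minimal normal, $G$ permutes the factors transitively; as $N$ lies in the kernel of this action and $G/N$ is cyclic, the image of $g$ is a single $k$-cycle, so after relabelling conjugation by $g$ sends $S_i$ to $S_{i+1}$ cyclically, and $\beta\colon x\mapsto x^{g^k}$ is an automorphism of $S_1$. If $G=N$, then $S$ is a non-cyclic simple quotient of $G$, so lifting a maximal minimal generating set of $S$ (as in the proof of Proposition~\ref{pro:quotients}) gives $m(G)\ge m(S)\ge 3$ by Proposition~\ref{pro:cyclic-B}(1) and its proof, and we are done; thus assume $G\ne N$, so $g\notin N$. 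I will search for a triple $\{g,n_1,n_2\}$ with $n_1,n_2\in S_1$ and $\langle n_1,n_2\rangle=S_1$.

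For such a triple generation is immediate: $\langle g,n_1,n_2\rangle$ contains $S_1$ together with its $g$-conjugates $S_2,\dots,S_k$, hence contains $N$, and therefore equals $G$. For independence I would analyse $\langle g,n_i\rangle$. Since this subgroup equals $\langle g\rangle M_i$, where $M_i=\langle g,n_i\rangle\cap N$ is the smallest $\langle g\rangle$-invariant subgroup of $N$ containing $n_i$ and $\langle g\rangle\cap N$, and since distinct direct factors commute, the projection of $M_i$ to $S_1$ is contained in $K_i:=\langle\, c,\ \beta^{\,j}(n_i)\ (j\in\Z)\,\rangle$, where $c\in S_1$ is the $S_1$-component of a generator of $\langle g\rangle\cap N$. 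Here $c\in C_{S_1}(\beta)$, and in fact $c=1$ whenever $\beta=1$ (then $g^k$ centralises $N$, forcing $\langle g\rangle\cap N\le Z(N)=1$). The key simplification is that the three independence conditions collapse to two: $n_i\in S_1$ lies in $M_i'$ only if it lies in the relevant $K$, and because $n_1,n_2\in K_i$ with $\langle n_1,n_2\rangle=S_1$, the memberships $n_1\in K_2$ or $n_2\in K_1$ would force $S_1\le K_2$ or $S_1\le K_1$. Hence, once $K_1\ne S_1$ and $K_2\ne S_1$, the set $\{g,n_1,n_2\}$ is automatically independent (and $g\notin\langle n_1,n_2\rangle=S_1$ is clear).

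Everything therefore reduces to a statement about $S$: there exist $n_1,n_2$ with $\langle n_1,n_2\rangle=S$ and with each $\langle\,c,\beta^{\,j}(n_i)\,\rangle$ a \emph{proper} subgroup. When $\beta=1$ this is trivial, for then $c=1$ and each $K_i=\langle n_i\rangle$ is cyclic, hence proper; any generating pair works. When $\beta\ne 1$ I would take $n_1\in C_S(\beta)$, so that $\beta^{\,j}(n_1)=n_1$ and $K_1\le C_S(\beta)\subsetneq S$, and then seek $n_2$ with $\langle n_1,n_2\rangle=S$ whose $\langle\beta\rangle$-orbit is short enough that $K_2\ne S$ — for instance an involution with small $\beta$-orbit, exploiting that $S$ is generated by involutions and needs at least three of them (Proposition~\ref{pro:cyclic-B}(1)). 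Verifying that such an $n_2$ can always be chosen is the main obstacle: it is a uniform assertion about non-abelian simple groups equipped with a cyclic group of automorphisms, and I expect to settle it using the Classification of Finite Simple Groups — the same input underlying the cited results of Lucchini and Menegazzo — while controlling the interaction with the fixed centraliser element $c$ via the constraint that $c$ realises, as an inner automorphism, a power of $\beta$.
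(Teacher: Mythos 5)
Your reduction is carried out correctly as far as it goes: the identification $\langle g,n_i\rangle=\langle g\rangle M_i$ with $M_i=\langle g,n_i\rangle\cap N$, the computation of the projection of $M_i$ to $S_1$, the facts that $c\in C_{S_1}(\beta)$ and that $c=1$ when $\beta=1$, the collapse of the three independence conditions to ``$K_1$ and $K_2$ are proper in $S_1$'', and the treatment of the degenerate cases $G=N$ and $\beta=1$ are all sound. But the proof has a genuine gap exactly where the difficulty of the lemma is concentrated. When $\beta\ne 1$ you need the following uniform statement: for \emph{every} non-abelian finite simple group $S$, every non-trivial $\beta\in\Aut(S)$ and every $c\in C_S(\beta)$ inducing a power of $\beta$ by conjugation, there exist $n_1\in C_S(\beta)$ and $n_2\in S$ with $\langle n_1,n_2\rangle=S$ and $\langle\, c,\ \beta^j(n_2)\ (j\in\Z)\,\rangle\ne S$. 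You do not prove this; you only say you ``expect to settle it using the Classification''. This is not a routine verification. Note that $n_2$ cannot itself lie in $C_S(\beta)$ (else $\langle n_1,n_2\rangle\le C_S(\beta)\subsetneq S$), so its $\beta$-orbit is genuinely non-trivial, and you must exhibit a proper $\beta$-invariant subgroup containing that whole orbit and $c$, one of whose elements nevertheless forms a generating pair with an element of $C_S(\beta)$. Even the weaker assertion that $C_S(\beta)$ meets some generating pair of $S$ is a deep fact (of the same calibre as the Lucchini--Menegazzo theorems the paper cites); the extra orbit condition is an additional requirement for which no argument, not even a case-analysis plan, is given. As it stands, your argument proves the lemma only when $g^k$ centralizes $N$.

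This is also where your route diverges most sharply from the paper, whose proof of this lemma involves no analysis of the simple factors of $N$ and no direct appeal to the Classification. The paper first notes that $G/N$ is cyclic of $p$-power order (Propositions~\ref{pro:quotients} and~\ref{pro:cyclic-B}), chooses a maximal subgroup $H$ containing a Sylow $p$-subgroup, a prime $q$ dividing $[G:H]$, and a Sylow $q$-subgroup $Q$ of $G$ lying inside $N$; the Frattini argument then yields an element $g\in N_G(Q)\cap H$ of $p$-power order with $\langle g\rangle N=G$. The set $Q\cup(H\cap N)\cup\{g\}$ generates $G$, while each of the three pairwise unions generates a subgroup lying in $N$, $N_G(Q)$ or $H$ respectively, all proper; hence any minimal generating set chosen inside this union has at least three elements. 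To salvage your approach you would in effect have to prove the simple-group statement above from scratch, which is precisely the kind of CFSG-dependent work the paper deliberately avoids at this step.
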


\begin{proof}
  Suppose that $G/N$ is cyclic.  Then, by
  Propositions~\ref{pro:quotients} and~\ref{pro:cyclic-B}, the
  quotient $G/N$ is cyclic of $p$-power order for some prime~$p$.  Let
  $P$ be a Sylow-$p$ subgroup of~$G$.  Since $G$ cannot be a
  $p$-group, we find a maximal subgroup $H$ of $G$ which contains~$P$.
  Let $q$ be a prime dividing $[G : H]$.

  Let $Q$ be a Sylow-$q$ subgroup contained in $N$, and observe that
  $Q$ is also a Sylow\nobreakdash-$q$ subgroup of~$G$.  From $Q \ne N$
  we conclude that $N_G(Q) \ne G$.  Furthermore, the Frattini argument
  yields $G = N_G(Q) N$.  Since $G/N$ is cyclic of $p$-power order, we
  find an element $g \in N_G(Q)$ of $p$-power order such that $\langle
  g \rangle N = G$.  Moreover, replacing $g$ and $Q$ by conjugates
  $g^x$ and $Q^x$ for a suitable $x \in G$, we may assume without loss
  of generality that $g \in P \leq H$.

  Then $\langle Q \cup H \rangle = G$ and $\langle (H \cap N) \cup
  \{g\} \rangle = H$ so that
  \[
  \langle Q \cup (H \cap N) \cup \{ g \} \rangle = G.
  \]
  We choose a minimal generating set $X$ of $G$ with $X \subseteq Q
  \cup (H \cap N) \cup \{ g \}$.  Since
  \[
  \langle Q \cup (H \cap N) \rangle \subseteq N, \quad \langle Q \cup
  \{g\} \rangle \subseteq N_G(Q), \quad \langle (H \cap N) \cup \{ g
  \} \rangle \subseteq H
  \]
  are all properly contained in $G$, we conclude that $m(G) \geq
  \lvert X \rvert \geq 3$.
\end{proof}

\begin{proof}[Proof of Theorem~\ref{thm:solvable}]
  Let $G$ be a finite $\mathcal{B}$-group.  By
  Proposition~\ref{pro:quotients} every quotient of $G$ has
  property~$\mathcal{B}$ and thus, by induction on the order, every
  proper quotient of $G$ is solvable.

  For a contradiction, assume that $G$ is not solvable and
  consequently has no non-trivial solvable normal subgroups.  In
  particular, since $\Phi(G)$ is nilpotent, this implies that $G$ is
  Frattini-free.  Let $M$ be a minimal normal subgroup of~$G$.  Then
  $M$ is non-abelian and $G$ has no other minimal normal subgroup
  $\tilde M$ besides $M$; otherwise $M$ would embed into the solvable
  group~$G/\tilde M$.

  Hence Lemmas~\ref{lem:d-equals-2} and \ref{lem:d-geq-3} yield the
  contradiction $2 = d(G) = m(G) \geq 3$.
\end{proof}


\section{The classification of $\mathcal{B}$-groups} \label{sec:3}

Recall that the socle $\Soc(G)$ of a finite group $G$ is the subgroup
generated by all minimal normal subgroups.

\begin{lemma} \label{lem:complement} Let $G$ be a Frattini-free finite
  $\mathcal{B}$-group.  Then $G = S \rtimes K$, where $S = \Soc(G)$ is
  elementary abelian and $C_K(S)$ is trivial.
\end{lemma}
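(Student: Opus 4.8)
The plan is to feed the solvability of $G$ (Theorem~\ref{thm:solvable}) into the classical structure theory of Frattini-free solvable groups, and to call on property~$\mathcal{B}$ only for the assertion that $S$ is a single-prime group. Since $G$ is solvable, each of its minimal normal subgroups is elementary abelian, so $S=\Soc(G)$ is abelian. Because $\Phi(G)=1$, standard theory gives $F(G)=\Soc(G)=S$; as $G$ is solvable one has $C_G(F(G))\le F(G)$, and $F(G)=S$ being abelian this forces $C_G(S)=S$. Finally $S$ is an abelian normal subgroup with $S\cap\Phi(G)=1$, so by Gasch\"utz's complementation theorem $S$ has a complement $K$, giving $G=S\rtimes K$. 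The centralizer assertion is then immediate, since $C_K(S)=K\cap C_G(S)=K\cap S=1$.

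It remains to see that $S$ is a $p$-group for a single prime $p$, and for this property~$\mathcal{B}$ is essential: the Frattini-free solvable group $C_2\times\mathrm{Sym}(3)$ has socle $C_2\times C_3$ yet satisfies $m>d$, so it is not a $\mathcal{B}$-group. I would argue by contradiction. If $S$ is not a $p$-group, write $S=A\times C$ with $A$ and $C$ nontrivial of coprime order, each characteristic in $S$ and hence normal in $G$. Because $\gcd(\lvert A\rvert,\lvert C\rvert)=1$, elements of $S$ can be chosen so as to generate chief factors lying in $A$ and in $C$ simultaneously, so the minimal size $d(G)$ is reached by such economical choices; this is exactly the phenomenon behind Proposition~\ref{pro:cyclic-B}(2), where a single generator of $C_{pq}$ does the work of separate generators of $C_p$ and $C_q$. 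Supplying the chief factors of $A$ and of $C$ by separate, independent elements instead produces a minimal generating set that is strictly longer, so $m(G)>d(G)$, contradicting property~$\mathcal{B}$.

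The hard part will be this last step. The Gasch\"utz backbone is routine, but turning the combine-versus-separate heuristic into a uniform proof requires controlling how the minimal and the maximal lengths of generating sets of $S\rtimes K$ depend on the isotypic decomposition of the semisimple $K$-module $S$ (note that $C_G(S)=S$ makes $S$ a faithful $K$-module). I would isolate the needed fact as a self-contained claim---whenever the socle of a Frattini-free $\mathcal{B}$-group admits a nontrivial coprime direct decomposition $S=A\times C$ one has $m(G)>d(G)$---and prove it by explicitly building the long minimal generating set out of a minimal generating set of $K$ together with separate module generators for $A$ and for $C$, using $C_G(S)=S$ to verify that none of the chosen elements is redundant.
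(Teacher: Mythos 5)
Your proposal is correct in outline and, at its core, follows the same strategy as the paper: solvability (Theorem~\ref{thm:solvable}) makes $S=\Soc(G)$ abelian, Gasch\"utz's complementation theorem (the paper cites \cite[Proposition~5.2.13]{Ro82}) gives $G=S\rtimes K$, and property~$\mathcal{B}$ enters exactly once, to exclude a second prime from $S$ by playing a ``combined'' generating set against a ``separated'' one. Two differences are worth recording. For the centralizer claim you go through Fitting-subgroup theory---$F(G)=\Soc(G)$ because $\Phi(G)=1$, then $C_G(F(G))\leq F(G)$ by solvability, so $C_G(S)=S$ and $C_K(S)=K\cap S=1$. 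This is correct and buys the stronger statement $C_G(S)=S$; the paper instead argues elementarily that $C_K(S)$ is normal in $G$ (its elements are even fixed under conjugation by $S$, since they commute with $S$) and meets $\Soc(G)$ trivially, and any nontrivial normal subgroup of a finite group contains a minimal normal subgroup, hence meets the socle. More importantly, the step you defer as ``the hard part'' is exactly the paper's central computation and is more routine than you anticipate. Take $P$, $Q$ nontrivial Sylow subgroups of $S$ for two distinct primes and $L$ a complement of $P\times Q$ (your $K$ and coprime decomposition $S=A\times C$ work just as well); concatenate a minimal generating sequence $z_1,\dots,z_f$ of $L$ with minimal module generating sequences $x_1,\dots,x_d$ of $P$ over $\F_pL$ and $y_1,\dots,y_e$ of $Q$ over $\F_qL$. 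The concatenation generates $G$, and it is irredundant: deleting some $z_j$ fails modulo $P\times Q$, while deleting some $x_i$ (resp.\ $y_i$) fails in $G/Q\cong P\rtimes L$ (resp.\ $G/P\cong Q\rtimes L$), because the surviving elements generate only a proper $L$-submodule there. Note that no appeal to $C_G(S)=S$ is needed for this irredundancy check, contrary to what you suggest. Finally, since $x_1$ and $y_1$ commute and have coprime orders, replacing the pair $x_1,y_1$ by the single element $x_1y_1$ yields a generating sequence of length $d+e+f-1$, so $d(G)<m(G)$, contradicting property~$\mathcal{B}$. So your plan closes as intended; written out, it is precisely the paper's proof.
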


\begin{proof}
  By Theorem~\ref{thm:solvable}, the group $G$ is solvable.  Hence $S$
  is abelian.  We recall that every abelian normal subgroup of a
  Frattini-free finite group admits a complement; e.g., see
  \cite[Proposition~5.2.13]{Ro82}.  Hence $G = S \rtimes K$ for a
  suitable subgroup~$K$.

  Now assume for a contradiction that $S$ is not elementary abelian.
  Let $P$ be a nontrivial Sylow-$p$ subgroup of $S$ and $Q$ a
  nontrivial Sylow-$q$ subgroup of $S$, for distinct primes $p \ne q$.
  Let $L$ be a complement for $P \times Q$ in~$G$ so that
  \[
  G =(P \times Q) \rtimes L.
  \]
  Choose a minimal generating sequence $z_1, \ldots, z_f$ of $L$ and
  extend this to a minimal generating sequence
  \[
  x_1, \ldots, x_d, \, y_1, \ldots, y_e, \, z_1, \ldots, z_f
  \]
  of $G$ of length $d+e+f$ by choosing a minimal generating sequence
  $x_1, \ldots, x_d$ of $P$ as a $\F_p L$-module and a minimal
  generating sequence $y_1, \ldots, y_e$ of $Q$ as a $\F_q L$-module.
  Since $P$ and $Q$ are non-trivial, the parameters $d$ and $e$ are
  positive.  But then
  \[
  x_1 y_1, \, x_2 \ldots, x_d, \, y_2, \ldots, y_e, \, z_1, \ldots,
  z_f
  \]
  is a generating sequence of $G$ of shorter length $d+e+f-1$,
  contradicting the fact that $G$ is a $\mathcal{B}$-group.

  Finally, $C_K(S)$ is invariant under conjugation by $S$ and by $K$.
  Hence it is a normal subgroup of $G$ which intersects the socle $S$
  trivially, and $C_K(S) = 1$.
\end{proof}

\begin{proof}[Proof of Theorem~\ref{thm:Frattini-free}]
  It is shown in \cite[\S3]{MBQ} that groups of the form specified in the
  theorem are Frattini-free and have property~$\mathcal{B}$.

  Now suppose that $G$ is a Frattini-free $\mathcal{B}$-group. By
  Lemma~\ref{lem:complement}, the group $G$ is abelian if and only if
  $G$ is elementary abelian.  Thus it suffices to analyse the
  situation where $G$ is non-abelian.  By Lemma~\ref{lem:complement},
  we have $G = P \rtimes Q$, where $P = \Soc(G)$ is an elementary
  abelian $p$-group for a prime $p$ and the complement $Q \ne 1$ acts
  faithfully on~$P$.  We decompose $P$ as a direct product $P = M_1
  \times \ldots \times M_d$, where each $M_i$ is a minimal normal
  subgroup of $G$, that is each $M_i$ is a simple $\F_pQ$-module.

  Fix a minimal generating sequence $z_1, \ldots, z_e$ of~$Q$.  By
  choosing in each $M_i$ an element $x_i \ne 1$, we obtain a minimal
  generating sequence $x_1, \ldots, x_d, z_1, \ldots, z_e$ of $G$ of
  length $d+e$.  If $M_i$ and $M_j$ were not isomorphic as $\F_p
  Q$-modules for some $i \ne j$ we could replace $x_i$ and $x_j$ by a
  single element $x_ix_j$ to obtain a minimal generating sequence of
  $G$ of length $d+e-1$.  Since $G$ has property $\mathcal{B}$ this
  cannot happen and thus all the $M_i$ are isomorphic to one another
  as $\F_p Q$-modules.  In particular, this implies that $Q$ acts
  faithfully on each~$M_i$.

  Finally we show that $Q$ is a cyclic $q$-group for some prime $q \ne
  p$.  For this we consider the quotient group
  \[
  \bar G = G/(M_2 \times \ldots \times M_d) = \bar M_1 \rtimes \bar Q
  \]
  which has property~$\mathcal{B}$ by Proposition~\ref{pro:quotients}.
  Clearly, $\bar M_1$ is an abelian minimal normal subgroup of $\bar
  G$ and thus $\bar Q$ a maximal subgroup of $\bar G$.  We claim that
  $\bar M_1$ is the unique minimal normal subgroup of $\bar G$.
  Indeed, if $\bar g \in \bar G \setminus \bar M_1$ then $g \in G
  \setminus P$ and $\langle g \rangle^G \supseteq [M_1,g]^G = M_1$;
  consequently, every normal subgroup of $\bar G$ contains~$\bar M_1$.
  Moreover, from $\bar M_1 \cap \bar Q = 1$ we conclude that $\bar G$
  is Frattini-free.  Lemma~\ref{lem:d-equals-2} shows that $Q \cong
  \bar Q$ is a cyclic $q$-group for some prime~$q$.  Since $G$ is
  non-abelian and Frattini-free, it cannot be a $p$-group, and hence
  $q \ne p$.
\end{proof}

\begin{proof}[Proof of Theorem~\ref{thm:finite-B-group}]
  First suppose that $G$ has property $\mathcal{B}$.  From
  Proposition~\ref{pro:quotients} we deduce that $H = G/\Phi(G)$ is a
  Frattini-free $\mathcal{B}$-group so that
  Theorem~\ref{thm:Frattini-free} gives a detailed description of~$H$.
  We observe that if $H$ is a $p$-group for some prime~$p$ then $G$ is
  a $p$-group, because $H$ is the image of any Sylow-$p$ subgroup of
  $G$ modulo $\Phi(G)$ and $\Phi(G)$ consists of the `non-generators'
  of~$G$.  Now suppose that $H = P \rtimes Q$ with $P$ and $Q$ as in
  (2) of Theorem~\ref{thm:Frattini-free}.

  We claim that every non-trivial element of $Q$ acts
  fixed-point-freely on~$P$, i.e., $C_P(y) = 1$ for $y \in Q \setminus
  \{1\}$.  Recall that $P$ is a direct sum of isomorphic copies of one
  simple $\F_p Q$-module~$M$.  Thus $Q$ acts faithfully on $M$ and,
  since $Q$ is abelian, this implies that every non-trivial element of
  $Q$ acts fixed-point-freely on~$M$ and therefore also on~$P$.  From
  \cite[Proposition~3.3 and Theorem~3.4]{MBQ} we deduce that $G$ is of
  the shape described in the theorem.

  Conversely, if $G$ has prime-power order then $G$ is a
  $\mathcal{B}$-group by the Burnside basis theorem, and it suffices
  to consider the remaining case: $G \cong P \rtimes Q$, where $P$ is
  a $p$-group and $Q$ is a cyclic $q$-group, for distinct primes $p
  \ne q$, such that $C_Q(P) \ne Q$ and every non-trivial element of
  $Q/C_Q(P)$ acts fixed-point-freely on $P/\Phi(P)$.  Since $P$ is a
  $p$-group and since $C_Q(P)$ is a proper subgroup of the cyclic
  $q$-group~$Q$, each subset of $G$ that generates $G$ modulo the
  normal subgroup $\Phi(P) \times C_Q(P)$ already generates~$G$.  Thus
  $\Phi(P) \times C_Q(P) \subseteq \Phi(G)$ and in order to show that
  $G$ has property $\mathcal{B}$ we may assume that $\Phi(P) \times
  C_Q(P) = 1$.  Then $P$ is elementary abelian and every element of
  $Q$ acts fixed-point-freely on $P$.  It follows from
  \cite[Proposition~3.3 and Theorem~3.2]{MBQ} that $G$ has
  property~$\mathcal{B}$.
\end{proof}

As a consequence of Theorem~\ref{thm:Frattini-free},
Remark~\ref{rem:explicit} and Theorem~\ref{thm:finite-B-group} we
record the following corollary.

\begin{coro} \label{cor:m-at-most-2}
  Let $G$ be a non-trivial finite group with $m(G) \leq 2$.  Then
  precisely one of the following holds:
  \begin{enumerate}
  \item $G$ is cyclic of prime-power order so that $d(G) = m(G) =1$;
  \item $G$ is cyclic of order divisible by exactly two distinct
    primes so that $d(G) = 1$ and $m(G)=2$;
  \item $G$ is a group of prime-power order with $d(G) = m(G) = 2$;
  \item $G$ is a $\mathcal{B}$-group of order divisible by exactly two
    distinct primes so that $d(G) = m(G) = 2$.
  \end{enumerate}
  In the last case $G/\Phi(G) \cong P \rtimes Q$, where the elementary
  abelian $p$-group $P$ is isomorphic to the additive group of a
  finite field~$F = \F_p(\zeta)$, the cyclic $q$-group $Q = \langle z
  \rangle$ embeds as $\langle \zeta \rangle$ into $F^\times$ and $Q$
  acts on $P$ via multiplication in $F$.
\end{coro}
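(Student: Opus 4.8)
The plan is to stratify according to the value of $d(G)$, using throughout the elementary inequality $d(G) \le m(G)$. Since $G$ is non-trivial we have $d(G) \ge 1$, so the hypothesis $m(G) \le 2$ forces $d(G) \in \{1,2\}$, and I would treat these two cases in turn.

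First I would dispose of the cyclic case $d(G) = 1$. Here Proposition~\ref{pro:cyclic-B}(2) identifies $m(G)$ with the number of distinct primes dividing $\lvert G \rvert$. If that number equals one we land in case~(1) with $d(G) = m(G) = 1$; if it equals two we land in case~(2) with $d(G) = 1$ and $m(G) = 2$; and by hypothesis it cannot exceed two. This settles every group with $d(G) = 1$.

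Next I would treat $d(G) = 2$. Then $2 = d(G) \le m(G) \le 2$ forces $m(G) = d(G) = 2$, so $G$ is a $\mathcal{B}$-group and Theorem~\ref{thm:finite-B-group} leaves exactly two possibilities. If $G$ has prime-power order, then $d(G) = 2$ places us in case~(3). Otherwise $G = P \rtimes Q$ with $P$ a $p$-group and $Q$ a cyclic $q$-group for distinct primes $p \ne q$, whence $\lvert G \rvert$ has exactly two prime divisors and we are in case~(4). To pin down the structure of $\bar G := G/\Phi(G)$ in case~(4), I would invoke Proposition~\ref{pro:quotients} and Theorem~\ref{thm:Frattini-free}: the quotient $\bar G = \bar P \rtimes \bar Q$ is a Frattini-free $\mathcal{B}$-group of type~(2), with $\bar P$ a direct sum of $d$ isomorphic copies of a single simple $\F_p \bar Q$-module and $\bar Q$ cyclic. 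The construction in the proof of Theorem~\ref{thm:Frattini-free} exhibits a minimal generating sequence of length $d + 1$ (one generator per copy, plus one generator of the cyclic group $\bar Q$); since $d(\bar G) = d(G) = 2$, this gives $d = 1$, so $\bar P$ is itself simple. Remark~\ref{rem:explicit} then realizes $\bar P$ as the additive group of $F = \F_p(\zeta)$ and $\bar Q = \langle z \rangle$ as $\langle \zeta \rangle \le F^\times$ acting by multiplication, exactly as asserted.

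Finally I would record that the four cases are mutually exclusive: the value of $d(G)$ separates (1) and (2) from (3) and (4), while within each pair the number of prime divisors of $\lvert G \rvert$ distinguishes the two alternatives, so precisely one holds. The only genuine obstacle is the quantitative deduction that $d(\bar G) = 2$ forces $\bar P$ to be a single simple module rather than a larger homogeneous sum; this is the one point where the hypothesis $m(G) \le 2$ is used beyond the mere fact that $G$ is a $\mathcal{B}$-group, and it rests on the explicit length count furnished by the proof of Theorem~\ref{thm:Frattini-free}.
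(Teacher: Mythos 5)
Your proposal is correct and follows essentially the route the paper intends: the paper records this corollary without a written proof, as a direct consequence of Theorem~\ref{thm:Frattini-free}, Remark~\ref{rem:explicit} and Theorem~\ref{thm:finite-B-group}, and your case split on $d(G)\in\{1,2\}$ (using Proposition~\ref{pro:cyclic-B} for the cyclic case and the $\mathcal{B}$-group classification for $d(G)=2$) fills in exactly that derivation. You also correctly isolate the one non-obvious step, namely that the homogeneous socle must consist of a single simple summand, which follows from the length-$(d+1)$ minimal generating sequence exhibited in the proof of Theorem~\ref{thm:Frattini-free} together with $d(G/\Phi(G))=2$.
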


%
%
%
%

\section{Automorphisms of Frattini-free
  $\mathcal{B}$-groups} \label{sec:4}

Let $G$ be a Frattini-free finite $\mathcal{B}$-group.  By
Theorem~\ref{thm:Frattini-free} we have $\lvert G \rvert = p^{rd}
q^k$, where $p \ne q$ are distinct primes and the socle $\Soc(G)$ is a
direct product of $d$ minimal normal subgroups, each elementary
abelian of size~$p^r$.  Moreover, as indicated in
Remark~\ref{rem:explicit}, the group $G$ can be embedded into the
general semi-affine group $\AGL(d,F)$ of degree $d$ over the finite
field $F = \F_p(\zeta)$, where $\zeta$ denotes a primitive $q^k$th
root of unity over~$\F_p$ and $[F:\F_p] = r$: writing
\[
\AGL(d,F) = F^d \rtimes \GL(d,F) \rtimes \Aut(F)
\]
we can realize $G$ as the subgroup consisting of all elements of the form
\[
\left( v, \zeta^n \Id, \id_F \right), \quad \text{where $v \in F^d$ and $\zeta^n
    \Id \in \GL(d,F)$ is scalar for $0 \leq n < q^k$.}
\] 

\begin{theorem} \label{thm:automorphism-group} Let $G \leq \AGL(d,F)$
  be a Frattini-free finite $\mathcal{B}$-group, embedded into the
  general semi-affine group of degree $d$ over a finite field $F$ as
  above.  Then $G$ is normal in $\AGL(d,F)$ and the action of
  $\AGL(d,F)$ on $G$ by conjugation induces the full automorphism
  group of~$G$.  Moreover, the action of $\AGL(d,F)$ on $G$ is
  faithful, unless $G$ is abelian.
\end{theorem}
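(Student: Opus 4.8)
The plan is to work throughout with the explicit embedding $G = T \rtimes Z$, where $T = F^d$ is the translation subgroup of $\AGL(d,F)$ (which coincides with $P = \Soc(G)$) and $Z = \langle \zeta \Id \rangle$ is the cyclic group of scalar matrices (which coincides with $Q$); I treat the three assertions in turn. For normality I would first note that $T$ is the kernel of the natural projection $\AGL(d,F) \to \GL(d,F) \rtimes \Aut(F)$ and hence normal, so it remains to check that the image $Z$ of $G$ is normal in $\GL(d,F) \rtimes \Aut(F)$. The scalar matrices $F^\times\Id$ are central in $\GL(d,F)$ and are permuted among themselves by $\Aut(F)$; a direct computation gives $(B,\tau)(\zeta^n\Id)(B,\tau)^{-1} = \tau(\zeta^n)\,\Id$. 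Since $Z = \langle\zeta\rangle\,\Id$ is the unique subgroup of order $q^k$ of the cyclic group $F^\times$, it is preserved by every field automorphism; thus $Z$ is normal and $G \trianglelefteq \AGL(d,F)$. In particular conjugation yields a well-defined homomorphism $c \colon \AGL(d,F) \to \Aut(G)$.

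For surjectivity, let $\alpha \in \Aut(G)$. As $P = \Soc(G)$ is characteristic, $\alpha$ restricts to an automorphism $a := \alpha|_P$ of $P$ and induces $\bar z \mapsto \bar z^{\,s}$ on $Q \cong G/P$ for some $s \in (\Z/q^k)^\times$. Writing $\mu_\zeta$ for the action of $z$ on $P$ (scalar multiplication by $\zeta$ on $F^d$), I would extract from $\alpha(zvz^{-1}) = \alpha(z)\alpha(v)\alpha(z)^{-1}$ the compatibility relation $a\,\mu_\zeta\,a^{-1} = \mu_{\zeta^s}$. The crux is to read off arithmetic from this: since $\mu_\zeta$ and $\mu_{\zeta^s}$ are conjugate in $\GL(P)$ they share a minimal polynomial over $\F_p$, so $\zeta$ and $\zeta^s$ are Galois conjugate and $s \equiv p^{\,j} \pmod{q^k}$ for some $j$. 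Setting $\sigma := \mathrm{Frob}^{\,j} \in \Aut(F)$ then gives $\sigma(\zeta) = \zeta^s$.

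Next I would normalise and factor $\alpha$. Because $\zeta^s \neq 1$, the element $1 - \zeta^s$ is a unit of $F$, so composing $\alpha$ with a suitable inner automorphism of $G$ by an element of $P$ achieves $\alpha(z) = z^s$ without changing $\alpha|_P = a$. Conjugation by $\sigma$ realises an automorphism $\beta$ with $\beta(z) = z^s$ and $\beta|_P$ equal to entrywise $\mathrm{Frob}^{\,j}$; hence $\gamma := \beta^{-1}\alpha$ fixes $z$, and its restriction $b := (\beta|_P)^{-1} a$ commutes with $\mu_\zeta$. As $F = \F_p[\zeta]$, commuting with $\mu_\zeta$ forces $b$ to be $F$-linear, i.e.\ $b \in \GL(d,F)$, so $\gamma$ is conjugation by $b$. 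Thus $\alpha$ agrees, up to conjugation by a translation, with conjugation by $\sigma b$, and $c$ is surjective.

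For faithfulness, the kernel of $c$ is $C_{\AGL(d,F)}(G)$. An element $(w,B,\tau)$ commuting with all translations must satisfy $B\,\tau(v) = v$ for every $v \in F^d$, which forces $(B,\tau) = (\Id,\id_F)$; hence $C_{\AGL(d,F)}(T) = T$. Finally, the translation $t_w$ centralises $z$ exactly when $(1-\zeta)w = 0$: if $G$ is non-abelian then $\zeta \neq 1$, so $1-\zeta$ is a unit, $w = 0$, and the kernel is trivial, whereas if $G$ is abelian then $\zeta = 1$ and every translation centralises $G$, so $c$ fails to be faithful. I expect the surjectivity step to be the main obstacle, precisely the passage from the group-theoretic relation $a\,\mu_\zeta\,a^{-1} = \mu_{\zeta^s}$ to the Galois constraint $s \in \langle p \rangle$ and the resulting factorisation; this is where the Frobenius structure of $F$ must be invoked.
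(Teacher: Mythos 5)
Your proposal is correct and follows essentially the same route as the paper's proof: factor an arbitrary automorphism through its action on $G/\Soc(G)$ (handled by a field automorphism via the shared minimal polynomial of $\zeta$ and $\zeta^s$), then on $\Soc(G)$ (handled by an element of $\GL(d,F)$, using that commuting with $\mu_\zeta$ forces $F$-linearity), and finally correct the image of $z$ by a translation, which works because $\zeta - 1$ is a unit; the faithfulness argument via the centralizer is likewise the same. The only differences are cosmetic — you perform the translation normalization first rather than last, and you verify normality explicitly where the paper asserts it.
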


\begin{proof}
  Clearly, $G$ is a normal subgroup of $\AGL(d,F)$.  If $G$ is
  abelian, then $r=1$, $k=0$ and $F = \F_p$ so that $G = \F_p^d$ and
  $\AGL(d,F) = \F_p^d \rtimes \GL(d,\F_p)$.  In this case $\Aut(G)
  \cong \GL(d,\F_p)$ is realized by the action of $\AGL(d,\F_p)$
  modulo $\F_p^d$.

  Now suppose that $G$ is non-abelian.  Since $F^d \subseteq G$, the
  centralizer of $G$ in $\AGL(d,F)$ is contained in~$F^d$.  Since
  $\zeta \Id \in G$ acts fixed-point-freely on~$F^d$, the centralizer
  of $G$ in $\AGL(d,F)$ is trivial. Hence the action of $\AGL(d,F)$ on
  $G$ by conjugation is faithful.  It remains to show that every
  automorphism of $G$ can be realized as conjugation by a suitable
  element of~$\AGL(d,F)$.

  Let $\phi \in \Aut(G)$.  We observe that $\Soc(G) = F^d$ and that
  the action of $G$ on $\Soc(G)$ by conjugation induces an embedding
  of $G/ \Soc(G)$ into $F^\times$.  The element $\zeta_2 \in F^\times$
  corresponding to the action of $(\zeta \Id)^\phi$ on $F^d$ satisfies
  the same minimal polynomial over $\F_p$ as~$\zeta$.  Thus the action
  of $\phi$ on $G/\Soc(G)$ can also be realized by an element of
  $\Aut(F) \leq \AGL(d,F)$.  Without loss of generality we may
  therefore assume that $\phi$ acts as the identity on $G/\Soc(G)$.
  Then $\phi$ acts on $\Soc(G) = F^d$ as an $F$-linear isomorphism.  A
  suitable element of $\GL(d,F) \leq \AGL(d,F)$ realizes the same
  action and we may further assume that $\phi$ restricts to the
  identity on $\Soc(G)$.  This means that $(\zeta \Id)^\phi = v (\zeta
  \Id)$ for some $v \in F^d$.  Finally, we notice that conjugation by
  $(\zeta-1)^{-1} v \in F^d \leq \AGL(d,F)$ induces the automorphism
  $\phi$.
\end{proof}

\section{The characterization of matroid groups} \label{sec:5}

A subset $X$ of a finite group $G$ is called
\emph{independent},  respectively \emph{Frattini-independent}, if there
is no proper subset $Y \subset X$ such that $\langle X \rangle =
\langle Y \rangle$, respectively $\langle X \cup \Phi(G) \rangle =
\langle Y \cup \Phi(G) \rangle$.  The group $G$ is called a
\emph{matroid group} if $G$ has property~$\mathcal{B}$ and every
Frattini-independent subset of $G$ can be extended to a minimal
generating set of~$G$.  Alternatively, $G$ is a matroid group if $H =
G/\Phi(G)$ is a Frattini-free $\mathcal{B}$-group and every
independent subset of $H$ can be extended to an minimal generating
set.  The definition of a matroid group given here is the one used in~\cite{SV1,SV2}. 


We obtain a small variation of the characterization of matroid groups
in~\cite{SV1}.

\begin{theorem}[Scapellato and Verardi \cite{SV1}]
  Let $G$ be a finite group and let $H = G/\Phi(G)$. The group $G$ is
  a matroid group if and only if one of the following holds:
  \begin{enumerate}
  \item $G$ is a $p$-group for some prime $p$,
  \item $H = P \rtimes Q$, where $P \cong \F_p^d$ and $Q$ is cyclic of
    order $q$, for primes $p, q$ such that $q \mid p - 1$, and $Q
    \hookrightarrow \F_p^\times$ acts on $P$ via field multiplication.
  \end{enumerate}
\end{theorem}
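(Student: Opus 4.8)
The plan is to reduce everything to the quotient $H = G/\Phi(G)$, since by definition $G$ is a matroid group precisely when $H$ is a Frattini-free $\mathcal{B}$-group in which every independent subset extends to a minimal generating set. Thus Theorem~\ref{thm:Frattini-free} applies to $H$ and splits the analysis into two cases. If $H$ is elementary abelian then $G$ is a $p$-group (as in the proof of Theorem~\ref{thm:finite-B-group}) and we are in case (1); conversely any $p$-group is a $\mathcal{B}$-group by the Burnside basis theorem and $H$ is an $\F_p$-vector space, whose independent subsets are exactly the linearly independent ones and extend to bases by Steinitz exchange, so every $p$-group is a matroid group. It remains to treat the non-abelian case $H = P \rtimes Q$, where $P = \Soc(H)$ is a direct sum of $d$ isomorphic simple $\F_p Q$-modules each of $\F_p$-dimension $r$, and $Q$ is cyclic of order $q^k$ acting faithfully. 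Here $d(H) = m(H) = d+1$, so a subset can lie in a minimal generating set only if it is independent of size at most $d+1$, and any independent subset of size exactly $d+1$ that fails to generate $H$ destroys the matroid property.

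For the necessity direction I would exhibit two forbidden configurations. First, since $P$ is elementary abelian, for $Y \subseteq P$ the subgroup $\langle Y \rangle$ is just the $\F_p$-span of $Y$; hence any $\F_p$-basis of $P$ is an independent subset of $H$ of size $rd$ which generates only $P \subsetneq H$. Extendability forces $rd \leq d+1$, and the case $rd = d+1$ is impossible, because a $(d+1)$-element independent set can lie in a minimal generating set only by equalling it, whereas this set does not generate $H$. Thus $rd \leq d$ and $r = 1$. Second, with $r = 1$ we have $P \cong \F_p^d$ and $Q = \langle z \rangle$ acting by a scalar $\zeta \in \F_p^\times$ of order $q^k$; if $k \geq 2$, then taking a basis $x_1, \ldots, x_d$ of $P$ together with the nontrivial element $z^q$, the set $\{x_1, \ldots, x_d, z^q\}$ is independent (the scalar $z^q$ mixes no coordinate), has size $d+1$, and generates $P \rtimes \langle z^q \rangle \neq H$, again violating the matroid property. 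Hence $k = 1$, so $\zeta$ has order $q \mid p-1$ and $H = \F_p^d \rtimes Q$ with $Q \hookrightarrow \F_p^\times$ acting by field multiplication, which is case (2).

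For sufficiency in case (2) I would show that the independence structure of $H = \F_p^d \rtimes Q$ is the affine matroid of $\F_p^d$. Because $q \mid p-1$, the generator $z$ acts as a genuine scalar $\lambda \in \F_p^\times$ with $\lambda \neq 1$, so every element $g$ with nontrivial image in $Q$ is a scaling with a unique fixed point $c_g \in \F_p^d$. Mapping $v \in P \setminus \{1\}$ to $(v,0)$ and each such $g$ to $(c_g, 1)$ in $\F_p^{d+1} = \F_p^d \oplus \F_p$, a short computation (using that $g_i g_j^{-1}$ is the translation by a nonzero multiple of $c_{g_i} - c_{g_j}$) shows that $\langle X \rangle = H$ if and only if the images of $X$ span $\F_p^{d+1}$, and that $X$ is independent in $H$ if and only if its images are $\F_p$-linearly independent. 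Since linear independence in $\F_p^{d+1}$ defines a matroid of rank $d+1$, every independent subset of $H$ extends to a spanning independent set, that is, to a minimal generating set; thus $H$, and hence $G$, is a matroid group.

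The main obstacle I anticipate is the sufficiency argument for case (2): one must verify carefully that subgroup generation and group-theoretic independence in $H$ correspond exactly to linear span and linear independence of the associated vectors, in particular that distinct group elements attached to the same centre $c_g$ (the $q-1$ nontrivial scalings about a common point) behave as parallel elements of the matroid and so never occur together in an independent set. The necessity direction is comparatively routine once the two counterexample families above are identified, the only subtlety being the integrality step $rd < d+1 \Rightarrow r = 1$.
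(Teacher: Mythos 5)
Your proposal is correct, and while your necessity argument coincides with the paper's, your sufficiency argument takes a genuinely different route. For necessity, both you and the paper use exactly the same two obstructions: an $\F_p$-basis of $P$ (an independent set of size $rd$ generating only $P$, forcing $rd\leq d$ and hence $r=1$), and, when $k\geq 2$, a module-generating set of $P$ together with $z^q$ (an independent set of size $d+1$ generating the proper subgroup $P\rtimes\langle z^q\rangle$, forcing $k=1$). For sufficiency, however, the paper argues directly on an arbitrary independent set $X\subseteq H$: if $X\subseteq P$ it extends $X$ to a basis of $P$ and appends $z$; if $X\not\subseteq P$ it normalizes so that $x_1=z$ (using that $H$ has no elements of order $pq$), writes the remaining elements as $v_iz^{j_i}$ with $\{v_2,\ldots,v_m\}$ independent in $P$, and completes with a basis extension of $P$. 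You instead construct a representation of the whole independence structure of $H$ as the linear matroid of rank $d+1$ over $\F_p$, sending $v\in P\setminus\{1\}$ to $(v,0)$ and each scaling $g$ with fixed point $c_g$ to $(c_g,1)$, so that generation corresponds to spanning and group-theoretic independence to linear independence; extendability is then automatic from the exchange property, and the needed closure under extension holds because every nonzero vector of $\F_p^{d+1}$ is a scalar multiple of an image vector. Your dictionary does hold (I checked both directions, including the parallel-class behaviour of the $q-1$ scalings about a common centre), so the approach is sound and arguably more conceptual: it exhibits $H$ as literally carrying a representable matroid, rather than verifying the extension property ad hoc. Two caveats: the verification you defer as ``a short computation'' is the real content and should be written out (it is the analogue of the paper's normalization step); and your parenthetical that $g_ig_j^{-1}$ is a translation by a multiple of $c_{g_i}-c_{g_j}$ is literally true only when $g_i$ and $g_j$ have the same image in $Q$ --- in general you must take $g_ig_j^{-m}$ for the power $m$ matching the scaling factors, which exists because $Q$ has prime order $q$. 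Neither point is a gap in substance, only in polish.
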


\begin{proof}
  By the Burnside basis theorem every finite group of prime-power
  order is a matroid group.  From now suppose that $G$ does not have
  prime-power order.

  First suppose that $G$ is a matroid group.  Then, by
  Theorem~\ref{thm:Frattini-free} and Remark~\ref{rem:explicit}, the
  Frattini quotient $H$ is a matroid group of the form $H = P \rtimes
  Q$, where $P$ is an elementary abelian $p$-group and $Q$ is a
  non-trivial cyclic group of order $q^k$, for distinct primes $p \ne
  q$, such that $Q \hookrightarrow F^\times$ acts faithfully on $P
  \cong F^d$ via multiplication in a finite field~$F$.  Here $F$ is
  obtained from $\F_p$ by adjoining a primitive $q^k$th root of unity
  and we set $r = [F:\F_p]$.  We observe that the common size of all
  minimal generating sets of $G$ is~$d+1$.

  Being isomorphic to an $\F_p$-vector space of dimension $rd$, the
  subgroup $P$ contains an independent subset of size~$rd$.  This
  subset extends to a minimal generating set of~$H$.  We deduce that
  $rd \leq d$, thus $r = 1$.  Let $z$ be a generator of $Q$ and assume
  for a contradiction that~$k \geq 2$.  Choose a minimal generating
  set $X$ for $P$ as an $\F_p Q$-module.  Then $X \cup \{ z^q \}$ is
  an independent set of size $d+1$ that does not generate~$H$ and does
  not extend to a minimal generating set of~$H$.  This implies that
  $H$ is not a matroid group in contradiction to our assumptions.
  Hence, $k = 1$, i.e., $Q$ is cyclic of order $q$.  From $Q
  \hookrightarrow \mathbb{F}_p^\times$ we obtain $q \mid p - 1$.

  Conversely, suppose that $H = P \rtimes Q$, where $P \cong \F_p^d$
  and $Q = \langle z \rangle$ is cyclic of order $q$, for primes $p,
  q$ such that $q \mid p - 1$, and $Q \hookrightarrow \F_p^\times$
  acts on $P$ via field multiplication.  By
  Theorem~\ref{thm:Frattini-free} the group $H$ has
  property~$\mathcal{B}$ and it suffices to show that every
  independent subset of $H$ extends to a minimal generating set.  Let
  $X = \{ x_1, \ldots, x_m \} \subseteq H$ be an independent subset of
  size~$m$.  If $X \subseteq P$ then, regarding $P$ as an
  $\F_p$-vector space, we extend $X$ to a minimal generating set of
  $P$ and add the generator $z$ of $Q$ to obtain a minimal generating
  set of~$H$.  Now suppose that $X \not \subseteq P$.  Since $H$ does
  not contain any element of order $pq$, we may assume without loss of
  generality that $x_1 = z$.  Then $X = \{z, v_2 z^{j_2}, \ldots, v_m
  z^{j_m} \}$ where $\{v_2, \ldots, v_m\} \subseteq P$ is an
  independent subset of size $m-1$ and $j_2, \ldots, j_m$ are
  integers.  We extend $\{v_2, \ldots, v_m\}$ to a minimal generating
  set $\{v_2, \ldots, v_d\}$ of~$P$.  Then $X \cup \{v_{m+1}, \ldots,
  v_d\}$ is a minimal generating set of~$H$.
\end{proof}  

Using Theorem~\ref{thm:finite-B-group} we obtain the following
consequence.

\begin{coro}
  Let $G$ be a finite group.  Then $G$ is a matroid group if and only
  if one of the following holds:
  \begin{enumerate}
  \item $G$ is a $p$-group for some prime $p$,
  \item $G = P \rtimes Q$, where $P$ is a $p$-group, $Q$ is a cyclic
    $q$-group for primes $p,q$ such that $q \mid p-1$, $Q/C_Q(P)$ has
    order $q$ and acts on $P/\Phi(P)$ fixed-point-freely.
  \end{enumerate}
\end{coro}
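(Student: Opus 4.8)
The plan is to lift the Scapellato--Verardi characterization of matroid groups, which is phrased as a condition on the Frattini quotient $H = G/\Phi(G)$, to an intrinsic condition on $G$ itself, using Theorem~\ref{thm:finite-B-group} to control the passage between $G$ and $H$. Since both characterizations treat the $p$-group case identically, I would immediately reduce to the situation where $G$ does not have prime-power order, so that the interesting content lies in matching condition (2) of the corollary against case (2) of the Scapellato--Verardi theorem.

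For the forward direction, suppose $G$ is a matroid group that is not a $p$-group. Every matroid group is in particular a $\mathcal{B}$-group, so Theorem~\ref{thm:finite-B-group} applies and yields $G = P \rtimes Q$ with $P$ a $p$-group, $Q$ a cyclic $q$-group, $C_Q(P) \ne Q$, every non-trivial element of $Q/C_Q(P)$ acting fixed-point-freely on $P/\Phi(P)$, and the crucial formula $\Phi(G) = \Phi(P) \times C_Q(P)$. I would then pass to $H = G/\Phi(G) = \bar P \rtimes \bar Q$, writing $\bar P = P/\Phi(P)$ and $\bar Q = Q/C_Q(P)$. Applying the Scapellato--Verardi theorem to the matroid group $G$ forces $H$ into the shape of case (2) there: $\bar P \cong \F_p^d$, the group $\bar Q$ is cyclic of order exactly $q$, and $q \mid p-1$. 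Transcribing these back through the identifications $\bar Q = Q/C_Q(P)$ and $\bar P = P/\Phi(P)$ gives precisely the conditions of the corollary, namely that $Q/C_Q(P)$ has order $q$ and acts fixed-point-freely on $P/\Phi(P)$ with $q \mid p-1$.

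For the converse, suppose $G = P \rtimes Q$ satisfies the conditions of (2). These subsume the hypotheses of Theorem~\ref{thm:finite-B-group}(2)---in particular $Q/C_Q(P)$ has order $q > 1$, so $C_Q(P) \ne Q$---and hence $G$ is a $\mathcal{B}$-group with $\Phi(G) = \Phi(P) \times C_Q(P)$. It remains to check that $H = \bar P \rtimes \bar Q$ falls into Scapellato--Verardi case (2). Here I would invoke the hypothesis $q \mid p-1$: since $\F_p$ then already contains the $q$th roots of unity, every faithful simple $\F_p \bar Q$-module is one-dimensional, so the field $F = \F_p(\zeta)$ attached to $H$ via Theorem~\ref{thm:Frattini-free} collapses to $\F_p$ itself. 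Thus $\bar P \cong \F_p^d$ with $\bar Q$ of order $q$ embedding into $\F_p^\times$ and acting by field multiplication, which is exactly the required shape, and their theorem then shows that $G$ is a matroid group.

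The main obstacle I anticipate is the bookkeeping in this converse step: one must verify that the divisibility $q \mid p-1$ is exactly what forces the field $F$ underlying the $\mathcal{B}$-group structure of $H$ to be the prime field $\F_p$, equivalently that the simple constituent of $P/\Phi(P)$ is one-dimensional. Everything else is a direct transcription between $G$ and its Frattini quotient, made available precisely by the formula $\Phi(G) = \Phi(P) \times C_Q(P)$ from Theorem~\ref{thm:finite-B-group}.
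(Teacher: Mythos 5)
Your derivation is exactly the one the paper intends: the corollary appears there without proof, as an immediate consequence of the Scapellato--Verardi theorem combined with Theorem~\ref{thm:finite-B-group}, and your two directions (forward via Theorem~\ref{thm:finite-B-group} plus Scapellato--Verardi, converse via Theorem~\ref{thm:finite-B-group}, then Theorem~\ref{thm:Frattini-free} and the observation that $q \mid p-1$ forces the simple constituent to be one-dimensional) fill in precisely the bookkeeping the paper omits; the identifications you gloss over follow from uniqueness of the normal Sylow $p$-subgroup of $H$.

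However, in checking the converse I found a genuine problem, located exactly at the step you singled out as the main obstacle. Fixed-point-freeness together with $q \mid p-1$ does \emph{not} force the action on $P/\Phi(P)$ to be homogeneous (all simple constituents isomorphic), which is what Scapellato--Verardi case (2), field multiplication, requires. Take $p=7$, $q=3$, $P=\F_7^{\,2}$, and $Q=\langle z\rangle$ of order $3$ acting by $(x,y)\mapsto(2x,4y)$: both non-trivial elements of $Q$ act fixed-point-freely, $3\mid 6$, and $C_Q(P)=1$, so $G=P\rtimes Q$ satisfies case (2) of the corollary. Yet $\{(1,0),(0,1),z\}$ is a minimal generating set of size $3$ (each $2$-element subset generates a proper subgroup, of order $49$ or $21$), while $\{(1,1),z\}$ generates $G$ because the conjugates $(2,4)$ and $(4,2)$ of $(1,1)$ span $P$; hence $d(G)=2<3\le m(G)$, so $G$ is not even a $\mathcal{B}$-group, let alone a matroid group. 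The two constituents ($z$ acting as $2$, respectively as $4$) are non-isomorphic $\F_7 Q$-modules, so Theorem~\ref{thm:Frattini-free} correctly excludes $G$; your argument reaches the opposite conclusion only because it routes through the converse direction of Theorem~\ref{thm:finite-B-group}, whose statement contains the same defect: ``fixed-point-free on $P/\Phi(P)$'' is strictly weaker than the homogeneity that Theorem~\ref{thm:Frattini-free} demands, whenever $\F_p$ contains non-Galois-conjugate primitive $q^k$th roots of unity. So your proof is a faithful rendering of the paper's intended argument, but it cannot be repaired as it stands: both the corollary and case (2) of Theorem~\ref{thm:finite-B-group} are false as written, and the hypothesis must be strengthened from ``acts fixed-point-freely'' to ``acts as a direct sum of isomorphic copies of one faithful simple module'' (equivalently, when $q \mid p-1$, ``acts by scalar multiplication'').
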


\section{The classification of groups with the basis
  property} \label{sec:6}
 
\begin{lemma} \label{lem:no-fix-B} Let $G = P \rtimes Q$, where $P$ is
  a $p$-group and $Q$ a cyclic $q$-group, for distinct primes $p \ne
  q$, such that every non-trivial element of $Q$ acts
  fixed-point-freely on~$P$.  Then $G$ has property~$\mathcal{B}$.
\end{lemma}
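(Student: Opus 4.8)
The plan is to verify that the hypotheses of the lemma force $G$ to be of the shape described in case~(2) of Theorem~\ref{thm:finite-B-group}, from which the conclusion follows at once. Two degenerate situations should be disposed of first: if $P = 1$ or $Q = 1$, then $G$ has prime-power order and the Burnside basis theorem already shows that $G$ is a $\mathcal{B}$-group. Hence I may assume throughout that both $P$ and $Q$ are non-trivial.

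Next I would pin down the centralizer $C_Q(P)$. Since $Q$ is non-trivial and every non-trivial element of $Q$ acts fixed-point-freely on~$P$, any $y \in Q \setminus \{1\}$ satisfies $C_P(y) = 1 \ne P$, so that $y \notin C_Q(P)$. Consequently $C_Q(P) = 1$, and in particular $C_Q(P) \ne Q$, which is the first condition required in Theorem~\ref{thm:finite-B-group}(2).

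The decisive step is to transfer fixed-point-freeness from $P$ to the Frattini quotient $P/\Phi(P)$, since $Q/C_Q(P) = Q$ must act fixed-point-freely on $P/\Phi(P)$. Here I would invoke coprime action: the cyclic $q$-group $\langle y \rangle$ acts on the $p$-group $P$ with $\gcd(q,p) = 1$, and $\Phi(P)$ is a characteristic, hence $\langle y \rangle$-invariant, normal subgroup of~$P$. The standard centralizer formula for coprime action yields
\[
C_{P/\Phi(P)}(y) = C_P(y)\,\Phi(P)/\Phi(P).
\]
As $C_P(y) = 1$ for every non-trivial $y \in Q$, the right-hand side is trivial, so every non-trivial element of $Q = Q/C_Q(P)$ indeed acts fixed-point-freely on $P/\Phi(P)$.

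With both conditions of Theorem~\ref{thm:finite-B-group}(2) verified, that theorem shows that $G$ has property~$\mathcal{B}$, completing the argument. The only point that genuinely requires care is the coprime centralizer identity displayed above; the remaining steps are routine bookkeeping with the definitions.
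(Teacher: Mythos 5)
Your proof is correct, and its overall strategy coincides with the paper's: dispose of the prime-power case via the Burnside basis theorem, note that $C_Q(P) = 1 \ne Q$, and reduce everything to verifying the hypotheses of Theorem~\ref{thm:finite-B-group}(2), the only substantive issue being the transfer of fixed-point-freeness from $P$ to $P/\Phi(P)$. Where you genuinely differ is in how that transfer is carried out. The paper argues by contradiction: if some $y \in Q \setminus \{1\}$ had a non-trivial fixed-point space $U$ in $P/\Phi(P)$, then Maschke's theorem gives a $Q$-module complement $W$ with $P/\Phi(P) = U \oplus W$, so $[P,\langle y \rangle]$ is proper in $P$, and the coprime-action decomposition $P = C_P(\langle y \rangle)[P,\langle y \rangle]$ (Gorenstein, Theorem~5.3.5) then forces $C_P(y) \ne 1$, a contradiction. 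You instead apply the coprime centralizer lifting lemma
\[
C_{P/\Phi(P)}(y) = C_P(y)\,\Phi(P)/\Phi(P),
\]
which immediately gives the conclusion with no contradiction argument and no appeal to Maschke. This lemma is indeed standard and applies here (the action of the $q$-group $\langle y \rangle$ on the $p$-group $P$ is coprime, and both groups are solvable), so your route is legitimate and somewhat shorter; on the other hand, the lifting lemma is itself usually proved by arguments of the same depth as $P = C_P(A)[P,A]$, so the difference is mostly one of packaging. Two small remarks: you handle the degenerate case $P = 1$ explicitly, which the paper's ``clearly $C_Q(P) = 1$'' quietly presupposes, and that is a point in your favour; conversely, you should cite a precise reference for the lifting lemma (for instance the coprime-action results in \cite{Gor}), since it does rely on the coprimality and solvability hypotheses you checked.
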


\begin{proof}
  If $Q = 1$ then the claim follows from the Burnside basis theorem.
  From now assume that $Q$ is non-trivial.  Clearly, $C_Q(P)=1$ and by
  Theorem~\ref{thm:finite-B-group} it suffices to show that every
  non-trivial element of $Q$ acts fixed-point-freely on~$P/\Phi(P)$.
  Let $y \in Q \setminus \{1\}$ and assume for a contradiction that
  $y$ has a non-zero set of fixed points $U$ in $P/\Phi(P)$.  By
  Maschke's theorem $P/\Phi(P)$ is a semisimple $\F_pQ$-module.  Hence
  there is a submodule $W$ such that $P/\Phi(P) = U \oplus W$.  Then
  $[P, \langle y \rangle]$ is strictly smaller than $P$ because
  $[P/\Phi(P), \langle y \rangle] \leq W$.  By
  \cite[Theorem~5.3.5]{Gor} we have $P = C_P(\langle y \rangle) [P,
  \langle y \rangle ]$, hence $C_P(\langle y \rangle) \ne 1$.  This
  contradicts the fact that $y$ acts on $P$ fixed-point-freely.
\end{proof}
 
\begin{lemma} \label{lem:no-fix-same-shape} Let $G = P \rtimes Q$,
  where $P$ is a $p$-group and $Q$ is a cyclic $q$-group, for distinct
  primes $p \ne q$, such that every non-trivial element of $Q$ acts
  fixed-point-freely on~$P$.  Let $H \leq G$.  Then one of the
  following holds:
  \begin{enumerate}
  \item $H$ is a $p$-group or a $q$-group,
  \item $H$ is conjugate in $G$ to a group of the form $R \rtimes S$
    with $R \leq P$ and $S \leq Q$.
  \end{enumerate}
\end{lemma}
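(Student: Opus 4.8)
The plan is to study $H$ through its intersection with the normal Frobenius kernel $P$ and then to transport the resulting $q$-part into $Q$ by a single Sylow conjugation. First I set $R = H \cap P$. As $P$ is normal in $G$, the subgroup $R$ is normal in $H$, and it is a $p$-group. The quotient $H/R \cong HP/P$ embeds into $G/P \cong Q$, so $H/R$ is a $q$-group; consequently $\lvert H \rvert = \lvert R \rvert \cdot \lvert H/R \rvert$ is the product of a $p$-power and a $q$-power, and $R$ is the unique, normal Sylow $p$-subgroup of~$H$.

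Next I would split off a complement. Choosing a Sylow $q$-subgroup $S$ of $H$, coprimality gives $R \cap S = 1$ and $\lvert RS \rvert = \lvert R \rvert \lvert S \rvert = \lvert H \rvert$, so $H = R \rtimes S$ with $S$ a $q$-group; no appeal to the full Schur--Zassenhaus theorem is needed beyond this counting. If $R = 1$ then $H = S$ is a $q$-group and if $S = 1$ then $H = R$ is a $p$-group, which is case~(1); hence I may assume that both factors are non-trivial.

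The decisive step is to conjugate $S$ into $Q$. Since $\lvert G \rvert = \lvert P \rvert \cdot \lvert Q \rvert$ with $P$ a $p$-group and $Q$ a $q$-group, the subgroup $Q$ is a full Sylow $q$-subgroup of~$G$. The $q$-subgroup $S$ lies in some Sylow $q$-subgroup of $G$, and all of these are conjugate to $Q$, so there is $g \in G$ with $S^g \leq Q$. Conjugating the whole decomposition yields $H^g = R^g \rtimes S^g$ with $S^g \leq Q$, which places $H$ in the form required by case~(2). The only point demanding care is that this conjugation must not disturb the $p$-part, but this is automatic: because $P$ is normal in $G$ we have $R^g \leq P^g = P$, so both factors land where they should simultaneously. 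I note that the argument uses only that $P$ is a normal $p$-subgroup with a $q$-group complement; the fixed-point-free hypothesis is not needed for this lemma itself, though it is what will ensure, via Lemma~\ref{lem:no-fix-B}, that such subgroups inherit property~$\mathcal{B}$ in the intended application.
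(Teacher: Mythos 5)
Your proof is correct, but it takes a genuinely different route from the paper's. The paper uses both special hypotheses of the lemma: since $Q$ is cyclic, $H/(H \cap P)$ is cyclic, so $H = (H \cap P)\langle h \rangle$ for a single element $h$; and since the element $y \in Q$ congruent to $h$ modulo $P$ acts fixed-point-freely on $P$, the map $x \mapsto [y,x]$ is a bijection of $P$, so the coset $yP$ is a single $P$-conjugacy class $\{\, y^x \mid x \in P \,\}$; hence $h = y^x$ for some $x \in P$ and $H^{x^{-1}} = (H \cap P)^{x^{-1}} \rtimes \langle y \rangle$. You instead split $H = (H \cap P) \rtimes S$ with $S$ a Sylow $q$-subgroup of $H$ and then move $S$ into $Q$ by Sylow conjugacy, using that $Q$ is a full Sylow $q$-subgroup of $G$; normality of $P$ guarantees the $p$-part stays inside $P$. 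Your closing observation is correct and worth emphasizing: your argument never invokes the fixed-point-free action or the cyclicity of $Q$, so it proves the stronger statement that the conclusion holds for any $G = P \rtimes Q$ with $P$ a normal $p$-group and $Q$ a $q$-group. What the paper's argument buys in exchange is slightly finer information---the conjugating element can be taken inside $P$ itself---but this extra precision is never used in the application (the classification of groups with the basis property), where only the isomorphism type of $H$ matters; both proofs serve that purpose equally well.
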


\begin{proof}
  Suppose that $H$ is a subgroup of~$G$ which is not of prime-power
  order.  Then $H \cap P$ and $H/(H \cap P)$ are non-trivial.  Choose
  an element $h \in H$ such that $H = (H \cap P) \langle h \rangle$
  and let $y \in Q$ such that $h \equiv y$ modulo~$P$.  Since $y$ acts
  fixed-point-freely on~$P$, we have $yP = \{ y^x \mid x \in P \}$ so
  that $h = y^x$ for some $x \in P$.  Consequently, $H = (R \rtimes
  S)^x$, where $R = (H \cap P)^{x^{-1}}$ and $S = \langle y \rangle$.
\end{proof}

Recall that a finite group $G$ has the basis property if all its
subgroups are $\mathcal{B}$-groups.
 
\begin{theorem}[McDougall-Bagnall and Quick \cite{MBQ}]
  Let $G$ be a finite group.  Then $G$ has the basis property if and
  only if one of the following holds:
  \begin{enumerate}
  \item $G$ is a $p$-group for some prime $p$,
  \item $G \cong P \rtimes Q$, where $P$ is a $p$-group and $Q$ is a
    non-trivial cyclic $q$-group, for distinct primes $p \ne q$, such
    that every non-trivial element of $Q$ acts fixed-point-freely
    on~$P$.
  \end{enumerate}
\end{theorem}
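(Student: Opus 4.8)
The plan is to prove both implications by combining the classification of $\mathcal{B}$-groups in Theorem~\ref{thm:finite-B-group} with the two structural results Lemma~\ref{lem:no-fix-B} and Lemma~\ref{lem:no-fix-same-shape} that have just been established. The sufficiency direction will be a direct reduction of arbitrary subgroups to groups already known to be $\mathcal{B}$-groups, while the necessity direction will require genuinely exploiting that \emph{every} subgroup, not merely $G$ itself, is a $\mathcal{B}$-group.

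For sufficiency, suppose $G$ is as in (1) or (2). If $G$ is a $p$-group, then every subgroup is again a $p$-group and hence a $\mathcal{B}$-group by the Burnside basis theorem, so $G$ has the basis property. If $G = P \rtimes Q$ as in (2), I would take an arbitrary $H \leq G$ and apply Lemma~\ref{lem:no-fix-same-shape}: either $H$ is a $p$-group or a $q$-group, and thus a $\mathcal{B}$-group by Burnside, or $H$ is conjugate in $G$ to a group $R \rtimes S$ with $R \leq P$ and $S \leq Q$. In the latter case $R$ is a $p$-group, $S$ is a cyclic $q$-group, and every non-trivial element of $S$ acts fixed-point-freely on the $S$-invariant subgroup $R$, since it already does so on all of $P$. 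Lemma~\ref{lem:no-fix-B} then shows that $R \rtimes S$ is a $\mathcal{B}$-group, and since property~$\mathcal{B}$ is invariant under conjugation, so is $H$. Hence all subgroups of $G$ are $\mathcal{B}$-groups.

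For necessity, assume $G$ has the basis property. Then in particular $G \leq G$ is a $\mathcal{B}$-group, so Theorem~\ref{thm:finite-B-group} applies: either $G$ is a $p$-group, giving (1), or $G = P \rtimes Q$ with $P$ a $p$-group, $Q$ a non-trivial cyclic $q$-group, $C_Q(P) \neq Q$, and every non-trivial element of $Q/C_Q(P)$ acting fixed-point-freely on $P/\Phi(P)$. What remains is to upgrade these weaker conditions to the stronger assertion in (2), namely that every non-trivial element of $Q$ acts fixed-point-freely on the whole of $P$; this upgrade is the main obstacle, and it is precisely the point where the passage from ``$G$ is a $\mathcal{B}$-group'' to ``$G$ has the basis property'' does the work.

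To carry out the upgrade I would argue by contradiction: suppose some $y \in Q \setminus \{1\}$ has $C_P(y) \neq 1$. Since $y$ centralizes $C_P(y)$ and $P \cap Q = 1$, the subgroup $\langle C_P(y), y \rangle = C_P(y) \times \langle y \rangle$ is an internal direct product of a non-trivial $p$-group and a non-trivial $q$-group. By the basis property this subgroup must be a $\mathcal{B}$-group, yet a direct product of a non-trivial $p$-group and a non-trivial $q$-group acts trivially and therefore cannot have the shape required in case (2) of Theorem~\ref{thm:finite-B-group} (equivalently, a minimal generating set of the $p$-part together with a generator of the $q$-part is a minimal generating set strictly longer than $d$ of the product). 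This contradiction forces $C_P(y) = 1$ for every $y \in Q \setminus \{1\}$; in particular $C_Q(P) = 1$, and $G$ falls into case (2) of the theorem, completing the proof.
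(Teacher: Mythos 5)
Your proof is correct and takes essentially the same route as the paper: sufficiency via Lemma~\ref{lem:no-fix-same-shape} reducing an arbitrary subgroup to $R \rtimes S$ handled by Lemma~\ref{lem:no-fix-B}, and necessity via Theorem~\ref{thm:finite-B-group} plus a contradiction obtained from non-trivially commuting elements of coprime order. The only cosmetic difference is the witness subgroup in the necessity step: the paper uses the cyclic subgroup $\langle xz \rangle = \langle x \rangle \times \langle z \rangle$ and invokes Proposition~\ref{pro:cyclic-B}(2), while you use $C_P(y) \times \langle y \rangle$ with a direct generating-set count --- the same underlying idea.
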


\begin{proof}
  Lemmas~\ref{lem:no-fix-B} and \ref{lem:no-fix-same-shape} imply that
  every group of the described form has the basis property.
  Conversely, suppose that $G$ has the basis property and is not of
  prime-power order.  From Theorem~\ref{thm:finite-B-group} we deduce
  that $G \cong P \rtimes Q$, where $P$ is a $p$-group and $Q$ is a
  non-trivial cyclic $q$-group.  If there were non-trivial, commuting
  elements $x \in P$ and $z \in Q$ then the cyclic group $\langle xz
  \rangle = \langle x \rangle \rtimes \langle z \rangle$ would not
  have property~$\mathcal{B}$, contradicting the basis property.
  Hence every non-trivial element of $Q$ acts fixed-point-freely
  on~$P$.
\end{proof}

\bigskip

\ackn The first author gratefully acknowledges research support by the
National Science Foundation through the Research Experiences for
Undergraduates (REU) Program at Cornell.  He thanks R.~Guralnick,
R.~K.~Dennis, and D.~Collins.
 

\end{document}